\documentclass[11pt,reqno]{amsart}

\usepackage{amsmath, amsfonts, amssymb, amscd, enumerate, young, youngtab, empheq, stmaryrd, slashed}
\usepackage{tikz}
\usepackage{hyperref}
\usepackage{here}
\usetikzlibrary{arrows,decorations.markings,positioning}
\tikzset{inner sep=0pt, node distance=5mm,
  root/.style={circle,draw,minimum size=5pt,thick},
  broot/.style={circle,draw,minimum size=5pt,thick,fill},
  xroot/.style={circle,draw,minimum size=5pt,thick,label=below:$\times$},
	iroot/.style={circle,draw,minimum size=5pt,thick,label=above:{\tiny$\alpha_i$}},
	jroot/.style={circle,draw,minimum size=5pt,thick,label=above:{\tiny$\alpha_j$}},
	kroot/.style={circle,draw,minimum size=5pt,thick,label=above:{\tiny$\alpha_k$}},
  doublearrow/.style={postaction={decorate},   decoration={markings,mark=at position .6 with {\arrow[line width=1.2pt]{>}}},double distance=1.6pt,thick},
  rdoublearrow/.style={postaction={decorate},   decoration={markings,mark=at position .4 with {\arrowreversed[line width=1.2pt]{>}}},double distance=1.6pt,thick},
  curvedline/.style={bend=right}
	}
\tolerance=10000
\sloppy

\theoremstyle{plain}
\newtheorem*{theorem*}{Theorem}
\newtheorem{theo}{Theorem}[section]

\theoremstyle{definition}

\newtheorem{definition}[theo]{Definition}

\theoremstyle{plain}
\newtheorem{lemma}[theo]{Lemma}
\newtheorem{theorem}[theo]{Theorem}

\newtheorem{proposition}[theo]{Proposition}

\theoremstyle{definition}

\newcommand{\beq}{\begin{equation}}
\newcommand{\eeq}{\end{equation}}

\renewcommand{\o}{\omega}

\renewcommand{\[}{\llbracket}

\newcommand{\res}{\operatorname{res}_W}
\newcommand{\tE}{\textbf{E}}
\newcommand{\tH}{\textbf{H}}
\newcommand{\tV}{\textbf{V}}

\newcommand{\bC}{\mathbb{C}}

\newcommand{\bR}{\mathbb{R}}
\newcommand{\bZ}{\mathbb{Z}}

\newcommand{\bH}{\mathbb{H}}

\newcommand{\bN}{\mathbb{N}}

\newcommand{\tC}{\textbf{C}}
\newcommand{\tD}{\textbf{D}}

\renewcommand{\gg}{\mathfrak{g}}

\newcommand{\gm}{\mathfrak{m}}

\newcommand{\gs}{\mathfrak{s}}
\newcommand{\gt}{\mathfrak{t}}

\newcommand{\so}{\mathfrak{so}}

\newcommand{\ggl}{\mathfrak{gl}}
\declareslashed{}{/}{0}{0}{\partial}

\newcommand\PGL{\mathrm{PGL}}
\newcommand\GL{\mathrm{GL}}

\newcommand\SO{\mathrm{SO}}

\newcommand\Sp{\mathrm{Sp}}
\renewcommand\sp{\mathfrak{sp}}
\renewcommand\sl{\mathfrak{sl}}

\newcommand{\cD}{\mathcal{D}}

\newcommand{\cR}{\mathcal{R}}

\newcommand{\cU}{\mathcal{U}}

\renewcommand{\square}{\kern1pt\vbox
{\hrule height 0.6pt\hbox{\vrule width 0.6pt\hskip 3pt
\vbox{\vskip 6pt}\hskip 3pt\vrule width 0.6pt}\hrule height0.6pt}\kern1pt}

\DeclareMathOperator\End{End\;}

\DeclareMathOperator\Ad{Ad}
\DeclareMathOperator\ad{ad}

\renewcommand\Im{\operatorname{Im}}
\newcommand\Ker{\operatorname{Ker}}

\newcommand{\Hom}{{\operatorname{Hom}}}

\newcommand{\wt}{\widetilde}

\newcommand{\be}{\begin{equation}}
\newcommand{\ee}{\end{equation}}

\def\<#1,#2>{\langle\,#1,\,#2\,\rangle}
\newcommand{\arr}{\begin{array}{rlll}}
\newcommand{\ea}{\end{array}}
\newcommand{\bea}{\begin{eqnarray}}
\newcommand{\eea}{\end{eqnarray}}
\newcommand{\bean}{\begin{eqnarray*}}
\newcommand{\eean}{\end{eqnarray*}}
\newcounter{ssig}
\setcounter{ssig}{0}

\newcounter{ttig}
\setcounter{ttig}{0}

\newcommand{\y}{\hskip0.1cm\tiny\young}
\title[Almost CR quaternionic manifolds]
{Almost CR quaternionic manifolds\\ and their immersibility in $\bH$P$^n$.}
\author{A. Santi}
\address{
Andrea Santi, Dipartimento di Matematica e Informatica, Universit\'a di
Parma, Parco Area delle Scienze 53/A, 43124, Parma, Italy}
\email{asanti.math@gmail.com, andrea.santi@unipr.it}
\thanks{{\it Acknowledgments}. This research was partially supported by the Project 
Firb 2012 {\it Geometria differenziale e teoria geometrica delle funzioni}, by GNSAGA
of INdAM and by project F1R-MTH-PUL-08HALO-HALOS08 of University of Luxembourg.}
\keywords{CR quaternionic manifold, quaternionic projective space, generalized integrability problem for $G$-structures, generalized Spencer cohomology groups}
\subjclass[2010]{53C10, 32V40}
\begin{document}
\newcommand{\bnabla}{\mbox{\boldmath$\nabla$}}
\begin{abstract} 
We apply the general theory of codimension one integrability conditions for $G$-structures developed in \cite{AS} to the case of quaternionic CR geometry.
We obtain necessary and sufficient conditions for an almost CR quaternionic manifold to admit local immersions as an hypersurface of the quaternionic projective space. We construct a deformation of the standard quaternionic contact structure on the quaternionic Heisenberg group which does not admit local immersions in any quaternionic manifold.
\end{abstract}
\maketitle
\null \vspace*{-.25in}
\section{Introduction}
\label{Introduction}
\setcounter{section}{1}
\setcounter{equation}{0}
The main aim of this paper is to apply the general theory of codimension one integrability conditions for $G$-structures developed in \cite{AS} to the case of quaternionic CR geometry. Let us first recall a familiar definition from complex CR geometry: a real manifold $M$ of dimension $2n-1$ is an {\it almost Cauchy-Riemann manifold (of hypersurface type)} if at each tangent space $T_xM$ there is a distinguished subspace $\cD_x\subset T_xM$ of real dimension $2n-2$ and a complex structure $I_x$ on it, both depending smoothly on $x\in M$ (the collection of subspaces $\cD_x$ constitutes a complex distribution $\cD\subset TM$).
\par
The notion of almost CR manifold arose in the study of real hypersurfaces of $\bC^n$. Indeed any such $M\subset \bC^n$  is endowed with the complex distribution $\cD$ given by $\cD_x=T_xM\cap iT_xM$ and satisfying the following additional property:
for any two sections $X,Y$ of $\cD$
\beq
\label{stufa1}
[X,Y]-[IX,IY]\in\Gamma(\cD)\qquad\text{and}\;\;\;\;\;\;\;\;\;\;\;\;
\eeq
\beq
\label{stufa2}
[IX,Y]+[X,IY]=I[X,Y]-I[IX,IY]\,.
\eeq
Equations \eqref{stufa1} and \eqref{stufa2} are necessary conditions for an almost CR manifold to admit local immersions in $\bC^n$ so that the almost CR structure is induced by the complex structure of the ambient space. It is well-known that they are also sufficient if $M$ is strictly pseudo-convex and $2n-1\geq 7$ (cf. \cite{Kur}) or if all the data are real-analytic (cf. \cite{AH}; see also \cite{AS} for an alternative proof using $G$-structures).
In this paper we wish to apply the general theory of $G$-structures, i.e. reductions $\pi:P\rightarrow M$ of the bundle of all linear frames on a manifold  $M$ (see \cite{St, Kb}), to the study of the analogous situation in quaternionic CR geometry. 
\vskip0.2cm\par
We recall that a real manifold $\wt M$ of dimension $4n$ is a {\it quaternionic manifold} if its tangent bundle is endowed with a linear quaternionic structure and a compatible torsion-free linear connection (see  \cite{Sal, AM}). The maximally homogeneous model for quaternionic manifolds is the quaternionic projective space $\bH$P$^n$, that is the compactification of $\bH^{n}$. 
On the other hand, manifolds endowed with a kind of quaternionic CR structure appeared for the first time in the $'00s$, introduced by O.\ Biquard in \cite{Bi2} and motivated by the study of conformal infinities of quaternionic K\"ahler metrics:
a real manifold $M$ of dimension $4n-1$ is {\it quaternionic contact} if it admits a distribution $\cD$ of rank $4n-4$ with the property that the symbol algebra $\gm(x)=\gm_{-2}(x)+\gm_{-1}(x)$ associated with the natural filtration $T^{-2}_xM=T_xM\supset T_x^{-1}M=\cD_x$ of $T_xM$ is at all points isomorphic with the
quaternionic Heisenberg algebra $\gm=\Im(\bH)+\bH^{n-1}$. 
In particular the distribution $\cD$ is  endowed with a linear quaternionic structure and a conformal class of Hermitian metrics (see \cite{AK2}).
\par
The maximally homogeneous model for quaternionic contact manifolds is the compactification $M=\Sp(n,1)/P$ of the quaternionic Heisenberg group,
where $P$ is the parabolic subgroup stabilizing an isotropic quaternionic line. This is a flag manifold and a real hypersurface of $\bH$P$^n$. 
Unfortunately not all real hypersurfaces of $\bH$P$^n$ are quaternionic contact (cf. \cite{Duc2}). To overcome this issue we consider here the following definition, which encompasses the notion of quaternionic contact manifold in a non-metrical framework. 
\begin{definition}\cite{MOP}\label{lavitafinisce} 
An {\it almost CR quaternionic manifold (of hypersurface type)} is 
a real manifold $M$ of dimension $4n-1$ together with 
a codimension one embedding $TM\rightarrow E$ of its tangent bundle $TM$ in a vector bundle $E$ endowed with a linear quaternionic structure.
\end{definition}
Let $\wt G=\GL_{n}(\bH)\cdot \Sp_1$ be the group of (twisted) linear automorphisms of the quaternionic vector space $V=\bH^n$ and $G_\sharp$ the subgroup 
of $\wt G$ which stabilizes the real subspace $W=\bH^{n-1}\oplus\Im\bH\subset V$ of real codimension one (see Lemma \ref{impor} for its explicit description).
The datum of an almost CR quaternionic structure is equivalent to a $G$-structure $\pi:P\rightarrow M$ on $M$ with structure group $G\subset\GL_{4n-1}(\bR)$ obtained upon restricting the action of $G_\sharp$ to $W$. 
On the other hand, as already advertised, the model ambient space for quaternionic CR geometry is $\wt M=\bH$P$^n$ and its quaternionic structure is an (integrable) $\wt G$-structure $\wt\pi:\wt P\to\wt M$.

It is a classical result (cf. \cite{G}) that obstructions to integrability for a $G$-structure can be expressed in terms of appropriate $G$-equivariant functions $\cR^{p+1}:P\rightarrow H^{p,2}(\gg)$ with values in the Spencer cohomology groups $H^{p,2}(\gg)$ of the Lie algebra $\gg=Lie(G)$ of $G$.
The general framework is exploited in \cite{Sal, AM}, where quaternionic manifolds are investigated and
the groups $H^{p,2}(\wt\gg)$, $\wt\gg=\ggl_{n}(\bH)\oplus\sp_1$, are explicitly computed (see also \cite{Oc2}). 
The analysis in this paper is based on a generalization of the obstruction theory to the case where the $G$-structure $\pi:P\to M$ under consideration is not integrable but possibly induced by an immersion into an ambient space $\wt M$ endowed with an integrable $\wt G$-structure $\wt\pi:\wt P\to\wt M$. 
The general theory of such {\it induced $G$-structures}  has been recently developed in \cite{AS} where obstructions to generalized integrability are expressed in terms of essential $(G,\wt G)$-curvatures, maps $\cR^{p+1}:P\rightarrow H^{p,2}(\wt\gg,W)$ with values in appropriate generalizations $H^{p,2}(\wt\gg,W)$ of the Spencer groups (see Definition \ref{RSCG}).
\vskip0.2cm\par
We now state the main result. Therein:
\begin{itemize}
\item[--] $\wt\gg=\ggl_{n}(\bH)\oplus \sp_1$ and $W=\bH^{n-1}\oplus\Im\bH$,
\item[--] $\gs\simeq \sl_{2n-2}(\bC)\oplus\sl_{2}(\bC)$ is the (complexified) Levi factor of $\gg$, 
\item[--] $\tE=\bC^{2n-2}$ and $\tH=\bC^{2}$ are the defining representations of $\sl_{2n-2}(\bC)$ and $\sl_{2}(\bC)$ respectively,
\item[--] $\Ad$ is the adjoint representation of $\sl_{2n-2}(\bC)$,
\item[--] $\tD$ is the irreducible representation of $\sl_{2n-2}(\bC)$ given by the kernel of the natural contraction $\tE\otimes \Lambda^2\tE^*\rightarrow \tE^*$.
\end{itemize}
\begin{theorem}\label{precedente}
Let $M$ be an almost CR quaternionic manifold of dimension $4n-1\geq 7$ and $\pi:P\rightarrow M$ its canonically associated $G$-structure. Then
there exists a canonical $G$-equivariant map $\cR^{1}:P\rightarrow H^{0,2}(\wt\gg, W)$ and, 
if $\cR^1$ identically vanishes, a canonical $G$-equivariant map $\cR^{2}:P\rightarrow H^{1,2}(\wt\gg,W)$.
If $M$ is locally immersible into a quaternionic manifold then $\cR^1= 0$. Moreover $\cR^1=\cR^2= 0$ if and only if
$M$ is locally immersible in $\bH${\rm P}$^n$ around any point, in such a way that the almost CR quaternionic structure is induced by the quaternionic structure of the ambient space. Finally there exist natural $\gs$-equivariant isomorphisms
\begin{align*}
H^{0,2}(\wt\gg,W)\otimes\bC&\simeq \Lambda^2\tE^* S^2\tH+(\tD+\tE^*)S^3\tH+(\Ad+\Lambda^2\tE^*) S^4\tH+\tE^* S^5\tH\,,\\
H^{1,2}(\wt\gg,W)\otimes\bC&\simeq H^{1,2}(\wt\gg)\otimes\bC\,,
\end{align*}
whereas the cohomology group $H^{2,2}(\wt\gg,W)$ vanishes.
\end{theorem}  

We remark that Theorem \ref{precedente} does not require analiticity assumptions and that $H^{2,2}(\wt\gg,W)=0$ is tantamount to the 
automatic vanishing of the  $(G,\wt G)$-curvature $\cR^3$ of third order. We also note that
$\cR^1$ and $\cR^2$ are well-defined intrinsic global objects on $M$ as a consequence of $H^{1,1}(\wt\gg,W)=H^{2,1}(\wt\gg,W)=0$ (see Proposition \ref{co1ord}). 
If $n=1$ then $\wt G=\GL_{1}(\bH)\cdot\Sp_1$ is isomorphic to $\mathrm{CO}_4^+$ and a four dimensional almost quaternionic manifold is just an (oriented) conformal manifold. We refer the interested reader to \cite{AS} where hypersurfaces of conformal manifolds are considered in the same spirit of Theorem \ref{precedente}.

We make clear that Theorem \ref{precedente} does not say anything about $\cR^1=0$ being sufficient for the existence of a local immersion into a quaternionic manifold; it would be interesting to understand whether or not this is true.
Explicit examples of homogeneous quaternionic contact manifolds are known (see \cite{LC, DC}).
As a simple application of the necessary condition, we give in Theorem \ref{seraven} an explicit construction 
of a family of homogeneous almost CR quaternionic manifolds with a non-trivial essential $(G,\wt G)$-curvature $\cR^1$ at any point and therefore not admitting local immersions in any quaternionic manifold. The construction is based on a deformation of the standard quaternionic contact structure on the quaternionic Heisenberg group.
Since any quaternionic contact manifold always admits an immersion in a quaternionic manifold (see \cite{Duc2}) this family also provides  examples of homogeneous almost CR quaternionic manifolds which are not quaternionic contact.
\vskip0.3cm
\par
The paper is organized as follows. In \S \ref{Preliminaries} we recall the basic definitions of quaternionic linear algebra and geometry; in particular
we describe the structure group $G$ in Lemma \ref{impor} and recall in \S\ref{prelquat}
the basics of $G$-structures $\pi:P\to M$ induced on submanifolds $M$ of manifolds $\wt M$ endowed with an almost quaternionic structure $\wt\pi:\wt P\to\wt M$. In \S \ref{orders} we give a long exact sequence relating the generalized $H^{p,q}(\wt\gg,W)$ and the usual $H^{p,q}(\wt\gg)$ Spencer cohomology groups of $\wt\gg=\ggl_{n}(\bH)\oplus\sp_1$ (Proposition \ref{sux}), calculate the formers for $q=0,1$ (Proposition \ref{co1ord})
and finally prove Theorem \ref{TeoremaRSF} on induced $G$-structures and their essential $(G,\wt G)$-curvatures. 

Sections \S\ref{anticipo}, \S\ref{anticipoanticipo} and \S\ref{anticipoanticipoanticipo} contain the most technical part of the paper. Section \S\ref{anticipo} is devoted to determining the $\gs$-module structure of $H^{0,2}(\wt\gg,W)$. The resulting Theorem \ref{speriamobene} is exploited in \S\ref{esempi!} to construct the  above mentioned family of homogeneous almost CR quaternionic manifolds. In \S\ref{anticipoanticipo} we show that $H^{2,2}(\wt\gg,W)$ vanishes. The proof relies on the  exact sequence of Proposition \ref{sux} and on Proposition \ref{quelloprima}, where the groups $H^{p,3}(\wt\gg)$ are determined with the help of Kostant version of the Borel-Bott-Weil Theorem \cite{Kos}. Finally \S\ref{anticipoanticipoanticipo} provides the canonical isomorphism of $H^{1,2}(\wt\gg,W)$ with $H^{1,2}(\wt\gg)$ and ends with some comments.
\vskip0.3cm
\par
\noindent
{\bf Conventions.}
Let $\gg$ be a Lie algebra. Tensor products of representations of $\gg$ are indicated either in the usual way or simply by juxtaposition.
With $n$ we always indicate a positive integer $n>1$.
\medskip\par\noindent
{\it Acknowledgements.} 
Part of this work was done while the author was a post-doc at the University of Parma. 
The author would like to thank the Mathematics Department and in particular A. Tomassini and 
C. Medori for support and ideal working conditions.
\vskip0.3cm
\section{Preliminaries on quaternionic algebra and geometry}
\label{Preliminaries}
\setcounter{section}{2}
\setcounter{equation}{0}
\subsection{Quaternionic linear algebra}
Let $V$ be a real vector space of dimension $4n$, $n>1$. A hypercomplex structure on $V$ is a triple $H=(I_1,I_2,I_3)$ of anticommuting complex structures on $V$ with $I_3=I_1I_2$. We call the $3$-dimensional subspace $
Q=\operatorname{span}\{I_1,I_2,I_3\}$
of $\End(V)$ a quaternionic structure on $V$ and the pair $(V,Q)$ a quaternionic vector space. We say that $H$ is an {\it admissible basis} of $(V,Q)$ and recall that two bases $H$, $H'$ are always related by an orthogonal matrix $A\in \SO(3)$ as follows (see e.g. \cite{AM}):
\beq
\label{ognialtra}
I'_\alpha=\sum_{\beta=1,2,3}A_\alpha^\beta I_\beta\,.
\eeq
We also say that a basis of $V$ is an {\it admissible frame} for $(V,Q)$ if it is of the form
$$
(I_1 e_1, I_2 e_1, I_3 e_1, e_1, \dots, I_1 e_n, I_2 e_n, I_3 e_n, e_n)\;,
$$
for some admissible basis $H=(I_\alpha)$ and a set $\{e_1,\dots,e_n\}$ of vectors of $V$. 

An isomorphism of quaternionic vector spaces $(V,Q)$ and $(V', Q')$ is an $\bR$-linear invertible map
$
\varphi:V\rightarrow V'
$
satisfying $\varphi^* Q'=Q$ (see e.g. \cite{AM}). We fix once and for all an identification of $(V,Q)$ with 
$\bH^n$  endowed with the quaternionic structure associated to 
$
I_1=-R_i, I_2=-R_j$ and $I_3=-R_k
$,
where $R_h$ is right multiplication by a quaternion $h\in \bH$. 

The following Lemma \ref{impor} describes the subgroup $G_\sharp$ of the automorphism group $\wt G$ of $(V,Q)$ which preserves the subspace $W=\bH^{n-1}\oplus\Im\bH$ of $V=\bH^{n}$. It is convenient to introduce the vector space decomposition
\beq
\label{3dec}
\begin{split}
V&=W\oplus W^\perp\\
&=U\oplus U^\perp \oplus W^\perp\;,
\end{split}
\eeq
with $W^\perp=\left\{(0,\dots,0,h_n)\,|\,h_n\in\bR\right\}$, $U=\bH^{n-1}\subset W$ and $U^\perp=\Im\bH\subset W$. 
\begin{lemma}
\label{impor}
An automorphism $(A,h)\in\wt G$, $A\in\GL_{n}(\bH)$, $h\in\Sp_1$, is in $G_\sharp$ if and only if with respect to the decomposition \eqref{3dec} it is of the form
\begin{equation*}
(A,h)=(\begin{pmatrix} A_1 & A_2 \\ 0 & \lambda h  \end{pmatrix},h)\,,
\end{equation*}
where $A_1\in \GL_{n-1}(\bH)$, $A_2\in\Hom_{\bH}(\bH,\bH^{n-1})$ and $\lambda\in\bR^\times$. In particular the subgroup $N_\sharp$
acting trivially on $W$ is trivial and $G_\sharp$ is naturally identifiable with the quotient group $G=G_\sharp/N_\sharp\subset GL(W)$. 
\end{lemma}
This lemma is proved by straightforward computations whose details are omitted for the sake of brevity.
Note that the Lie algebra $\gg$ of $G$ is isomorphic to the semidirect sum 
$(\ggl_{n-1}(\bH)\oplus\bH)\inplus\Hom_{\bH}(\bH,\bH^{n-1})$
with non-trivial brackets given by the natural ones of $\ggl_{n-1}(\bH)$ and $\bH\simeq \mathfrak{u}_2$ as Lie subalgebras and by $[h,A_2]=-A_2\circ L_h$, 
$[A_1, A_2]=A_1\circ A_2$, where $L_h$ is right multiplication by $h\in\bH$, $A_2\in \Hom_{\bH}(\bH,\bH^{n-1})$ and
$A_1\in \ggl_{n-1}(\bH)$. In particular $\gg$ is not semisimple, with radical given by $(\bR\oplus\bR)\inplus\Hom_{\bH}(\bH,\bH^{n-1})$. From now on we consider
\beq
\label{muccaII}
\gs=\sl_{n-1}(\bH)\oplus\sp_1
\eeq
as fixed Levi factor of $\gg$. We stress that the embedding of $\gs$ in the Lie algebra $\wt\gg=\ggl_{n}(\bH)\oplus\sp_1$ of $\wt G$
is \emph{not} the obvious one but diagonal on the ideal $\sp_1$. 
\subsection{Induced \texorpdfstring{$G$}--structures and CR quaternionic manifolds}
\label{prelquat}
A linear quaternionic structure on a real bundle $E\to \wt M$ of rank $4n$ is a subbundle $Q$ of $\End(E)$ which, around any point $x\in\cU\subset\wt M$, is generated by a field $H=(I_1,I_2,I_3):\cU\to \End(E)$ of hypercomplex structures.
We call $H$ an {\it admissible (local) basis} of $Q$ and remark that two bases on the same $\cU\subset \wt M$ are of the form \eqref{ognialtra} for some $A:\cU\rightarrow SO(3)$. A real manifold $\wt M$ is called an almost quaternionic manifold if $T\wt M$ has a linear quaternionic structure. In this case the associated bundle $\wt\pi:\wt P\rightarrow \wt M$ of admissible frames,
\begin{equation}
\label{eq:eq:eq}
\hskip-1cm\wt P=\left\{\text{frame}\;(I_1 e_1, I_2 e_1, I_3 e_1, e_1,\dots, I_1 e_{n}, I_2 e_n, I_3e_n, e_n)\;\text{of}\;T_x\wt M\;|\right.
$$
$$
\qquad\qquad\qquad\qquad\qquad\hskip1.2cm\left.\phantom{\wt M}H=(I_\alpha)\;\text{admissible basis}\right\},
\end{equation}
is a $\wt G$-structure on $\wt M$ and, if it there is a torsion-free compatible connection, $\wt M$ is called a quaternionic manifold.
 
The quaternionic projective space $\bH$P$^{n}$ is the basic example of quaternionic manifold 
and it is homogeneous $\bH\text{P}^{n}\simeq \PGL_{n+1}(\bH)/P$ ($P=$ stabilizer of quaternionic line)
under the action of an automorphism group of maximal dimension. Indeed, see \cite{AM, Oc2, AS, SS, St}, the Lie algebra $\wt\gg=\ggl_{n}(\bH)\oplus\sp_1$ is of finite type in Cartan sense  and its
{\it maximal transitive prolongation} $\wt\gg_\infty$ is a $\bZ$-graded Lie algebra isomorphic to $\mathfrak{sl}_{n+1}(\bH)$ and with grading 
\beq
\label{maxtrans}
\wt\gg_{\infty}=\wt\gg_{\infty}^{-1}\oplus\wt\gg_{\infty}^{0}\oplus\wt\gg_{\infty}^{1}\,,
\eeq
where $\wt\gg_{\infty}^{-1}=V=\mathbb{H}^n$, $\wt\gg_{\infty}^0=\wt\gg$ and $\wt\gg_{\infty}^{1}\simeq V^*$. 

Any hypersurface $M$ of an almost quaternionic manifold $\wt M$ has a natural structure of an almost CR quaternionic manifold with $E=T\wt M|_M$. In the terminology of \cite{AS} $M$ is a {\it $\wt P$-regular} submanifold of $\wt M$, where $\wt\pi:\wt P\rightarrow \wt M$ is the $\wt G$-structure associated with $\wt M$. This means that at any point $x\in M$ there exists a frame 
$(e_1,\dots,e_{4n})\in \wt P|_x$
with $e_i\in T_xM$ for all $1\leq i\leq 4n-1$. Any such a frame is called {\it adapted} and
$(e_1,\dots,e_{4n-1})$ is the {\it induced frame}.

By the results of \cite{AS} the collection of all induced frames
\begin{equation}
\label{setinduced}
P=\left\{\text{induced frame}\,(e_1,\dots, e_{4n-1})\,\text{of}\; T_xM\,|\, x\in M\right\}
\end{equation}
is a $G$-structure $\pi:P\to M$ on $M$, where the structure group $G\simeq G_\sharp/N_\sharp$. By Lemma \ref{impor} we are allowed to identify $G$ directly with $G_\sharp$. 

Conversely an almost CR quaternionic structure on a manifold $M$ (not necessarily induced by an immersion $M\subset \wt M$) is the same as a $G$-structure $\pi:P\rightarrow M$ with structure group $G=G_\sharp$. Following \cite{AS} we call {\it (locally) immersible} any $\pi:P\rightarrow M$ which is locally of the form \eqref{setinduced} for some immersion in quaternionic projective space $\wt M=\bH$P$^n$.
\par\medskip
\vskip0.3cm
\section{Codimension one integrability conditions}
\label{orders}
\setcounter{section}{3}
\setcounter{equation}{0}
The main aim of this section is to apply the general results of \cite{AS} and describe obstructions to (local) immersibility for an almost CR quaternionic manifold. We first recall the relevant definitions and prove some auxiliary results. 
\subsection{Preliminaries}
Let $\gg$ be the Lie algebra formed by all elements of $\wt\gg=\ggl_n(\bH)\oplus\sp_1$ which preserve the subspace $W=\bH^{n-1}\oplus\Im\bH$ of $V=\bH^n$. Let also $\wt\gg_\infty$ be the maximal transitive prolongation of $\wt\gg$.
\begin{definition}\cite{AS}
\label{RSCG}
The {\it generalized Spencer cohomology groups} $H^{p,q}(\wt\gg, W)$ are the cohomology groups of the differential complex
$$
\dots\overset{\partial}{\longrightarrow} C^{p+1,q-1}(\wt\gg, W)\overset{\partial}{\longrightarrow}C^{p,q}(\wt\gg, W)\overset{\partial}{\longrightarrow} C^{p-1,q+1}(\wt\gg, W)\overset{\partial}{\longrightarrow}\cdots\;,
$$
where $C^{p,q}(\wt\gg, W)=\wt\gg_\infty^{p-1}\otimes\Lambda^q W^{*}$ for all $p,q\geq 0$ 
and  the generalized Spencer operator $\partial$
is given by
$$
\partial c(w_1,\dots,w_{q+1}):=\sum_{i=1}^{q+1}(-1)^{i}[c(w_1,\dots,w_{i-1},\hat{w_i},w_{i+1},\dots,w_{q+1}),w_i]
$$
for all $c\in C^{p,q}(\wt\gg, W)$ and $w_1,\dots,w_{q+1}\in W$.
\end{definition}
There is a relationship between the usual Spencer groups $H^{p,q}(\wt\gg)$ 
of $\wt\gg$ and the generalized ones. We first note  that since each $C^{p,q}(\wt\gg,W)$ carries a natural structure of $\gg$-module for which $\partial$ is equivariant then any group $H^{p,q}(\wt\gg,W)$ has a representation of $\gg$ and in particular of the Levi factor $\gs$. Let $\rho\in V^*$ be a defining 
one-form for $W=\Ker\rho$ and $\res:\Lambda^q V^*\to\Lambda^q W^*$ the restriction map. Let also $\res^{-1}:\Lambda^q W^*\to\Lambda^q V^*$ be the right inverse of $\res$ determined by the vector space decomposition \eqref{3dec}. We consider the short exact sequence $
0\longrightarrow C^{p,q-1}(\wt\gg,W)\longrightarrow C^{p,q}(\wt\gg)\longrightarrow C^{p,q}(\wt\gg,W)\longrightarrow 0
$ of differential complexes induced by the short exact sequence
$$
0\longrightarrow\Lambda^{q-1}W^*\stackrel{\Lambda_\rho \circ \res^{-1}}{\longrightarrow}\Lambda^q V^*\stackrel{\res}{\longrightarrow}\Lambda^q W^*\longrightarrow 0\,,
$$
where $\Lambda_\rho$ is right multiplication by $\rho$. It is easy to see that every morphism in the short exact sequence is equivariant under $\gs$. The associated long exact sequence in cohomology (see e.g. \cite[pag. 17]{BT}) implies the following.
\begin{proposition}
\label{sux}
There exists a long exact sequence of vector spaces
\begin{equation}
\begin{split}
\label{sequenza}
\cdots\longrightarrow H^{p+1,q-1}(\wt\gg, W)\longrightarrow H^{p,q-1}(\wt\gg,W)\longrightarrow H^{p,q}(\wt\gg)\longrightarrow \\
\phantom{ccc}\longrightarrow H^{p,q}(\wt\gg, W)\longrightarrow H^{p-1,q}(\wt\gg,W)\longrightarrow H^{p-1,q+1}(\wt\gg)\longrightarrow\cdots
\end{split}
\end{equation}
which is compatible with the natural action of $\gs$.
\end{proposition}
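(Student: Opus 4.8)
The plan is to produce the long exact sequence from a short exact sequence of cochain complexes built from the decomposition $V = W \oplus W^\perp$ with $W^\perp \simeq \bR$. The key observation is that $W^* $ sits inside $V^*$ as the annihilator of $W^\perp$, so there is a natural inclusion $\Lambda^q W^* \hookrightarrow \Lambda^q V^*$, with cokernel canonically isomorphic to $\Lambda^{q-1} W^* \otimes (W^\perp)^* \simeq \Lambda^{q-1}W^*$ (contracting with a fixed generator of $W^\perp$). Tensoring with $\wt\gg^{p-1}$ gives, for each $p$, a short exact sequence
\begin{equation*}
0 \longrightarrow C^{p,q}(\wt\gg,W) \longrightarrow C^{p,q}(\wt\gg) \longrightarrow C^{p,q-1}(\wt\gg,W) \longrightarrow 0\,,
\end{equation*}
where $C^{p,q}(\wt\gg) = \wt\gg^{p-1}\otimes\Lambda^q V^*$ denotes the usual Spencer cochains. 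First I would verify that these maps are compatible with the respective Spencer differentials, so that we get a short exact sequence of complexes (indexing the complexes by total degree while $p+q$ is fixed in the usual Spencer bidegree convention). Here one must be slightly careful: the generalized Spencer operator of Definition \ref{RSCG} only feeds in arguments from $W$, whereas the usual one feeds in arguments from $V$; checking that the inclusion and the ``contract with the $W^\perp$-direction'' map intertwine the two differentials is the technical heart of the argument, and it is where the fact that $[\,\cdot\,,\,w\,]$ for $w\in W^\perp$ interacts correctly with the prolongation structure of $\wt\gg_\infty$ gets used.

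Once the short exact sequence of complexes is in place, the long exact sequence in cohomology is purely formal (the snake lemma / zig-zag lemma). I would then identify the three terms appearing in each spot of the long exact sequence: the cohomology of $C^{\bullet,\bullet}(\wt\gg,W)$ in bidegree $(p,q)$ is $H^{p,q}(\wt\gg,W)$ by definition; the cohomology of $C^{\bullet,\bullet}(\wt\gg)$ is $H^{p,q}(\wt\gg)$, the usual Spencer cohomology; and the cohomology of the quotient complex $C^{p,q-1}(\wt\gg,W)$ is $H^{p,q-1}(\wt\gg,W)$ — this last identification requires that the differential induced on the quotient really is (up to the sign conventions built into the Spencer complex) the generalized Spencer operator again, which follows from the explicit formula once one unwinds the contraction. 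Assembling these identifications into the zig-zag sequence yields exactly \eqref{sequenza}, with the connecting homomorphism $H^{p,q-1}(\wt\gg,W) \to H^{p,q}(\wt\gg)$ playing the role of the boundary map.

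Finally, $\gs$-equivariance: every map in sight — the inclusion $\Lambda^q W^*\hookrightarrow \Lambda^q V^*$, the contraction onto $\Lambda^{q-1}W^*$, and the Spencer differentials — is $G_\sharp$-equivariant, hence $\gs$-equivariant, because $G_\sharp$ preserves $W$ and acts on $W^\perp$ by a character (see \eqref{resH}), so a fixed generator of $W^\perp$ is scaled but the induced maps on cohomology are still maps of $\gs$-modules; restricting to the Levi subalgebra $\gs$ then gives the compatibility claimed. The main obstacle I anticipate is the bookkeeping in the first paragraph: getting the sign conventions and the ``which arguments come from $W$ versus $W^\perp$'' distinction exactly right so that the inclusion and projection are genuine chain maps for the Spencer differentials as defined — everything downstream is standard homological algebra. (Since the actual paper references \cite{AS} for this construction, I would also lean on the general machinery developed there rather than reprove the chain-map property from scratch.)
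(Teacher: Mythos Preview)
Your short exact sequence is set up in the wrong direction, and the inclusion you propose is \emph{not} a chain map. Concretely: take $c\in C^{p,q}(\wt\gg,W)=\wt\gg^{p-1}\otimes\Lambda^qW^*$ and let $\tilde c\in C^{p,q}(\wt\gg)$ be its extension by zero along $W^\perp$. With $w_1,\dots,w_q\in W$ and $\rho\in W^\perp$ the generator,
\[
(\partial\tilde c)(w_1,\dots,w_q,\rho)=\sum_{i=1}^{q}(-1)^i[\tilde c(\dots,\hat w_i,\dots,\rho),w_i]+(-1)^{q+1}[\tilde c(w_1,\dots,w_q),\rho]=(-1)^{q+1}[c(w_1,\dots,w_q),\rho],
\]
and $[\wt\gg^{p-1},W^\perp]\subset\wt\gg^{p-2}$ has no reason to vanish (it does not). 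So $\partial\tilde c$ does not lie in the image of your inclusion, and the ``technical heart'' you flag cannot be checked --- there is no special feature of the prolongation structure that kills this term. Consequently you never get a short exact sequence of complexes, and the zig-zag lemma does not apply.

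The paper uses the dual construction: the \emph{restriction} map $C^{p,q}(\wt\gg)\to C^{p,q}(\wt\gg,W)$, $c\mapsto c|_{\Lambda^qW}$, which \emph{is} tautologically a chain map (the generalized Spencer differential only plugs in vectors from $W$). Its kernel $C^{p,q}_\perp(\wt\gg)=\{c:c|_{\Lambda^qW}=0\}$ is then a subcomplex, and one identifies $C^{p,q}_\perp(\wt\gg)\simeq C^{p,q-1}(\wt\gg,W)$ a posteriori via $c\mapsto(-1)^q(\imath_\rho c)|_{\Lambda^{q-1}W}$, checking separately that this is a chain isomorphism. This yields the sequence in the form stated; note that the order of the two $W$-terms around each $H^{p,q}(\wt\gg)$ is fixed by which side is the sub and which is the quotient, and your arrangement would have produced the opposite order even had the chain-map issue not arisen. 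Your equivariance discussion is essentially fine once the maps are corrected: restriction is $\gg$-equivariant, while $\imath_\rho$ is only equivariant for the subalgebra fixing $\rho$ up to scale, which still contains $\gs$.
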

\medskip\par
We recall that the usual Spencer cohomology groups satisfy (cf. \cite{Sal, Y})
$$H^{0,0}(\wt\gg)\simeq V\;,\;\;\;H^{0,1}(\wt\gg)\simeq \ggl(V)/\wt\gg\;\;\;\;\;\;\text{and}$$
$$H^{p,0}(\wt\gg)=H^{p,1}(\wt\gg)=0\qquad\text{for every}\;\;p\geq 1\,,$$ 
as a direct consequence of definitions and the fact that $\wt\gg_\infty$ is the maximal transitive prolongation of $\wt\gg$. In order to prove our first Theorem \ref{TeoremaRSF} in \S \ref{orders2} we need a similar property for the generalized Spencer groups. 
\begin{proposition}
\label{co1ord}
The groups $H^{p,0}(\wt\gg,W)$ and $H^{p,1}(\wt\gg,W)$ are trivial for all $p\geq 1$ whereas $H^{0,0}(\wt\gg,W)\simeq V$ and $H^{0,1}(\wt\gg,W)\simeq V\otimes W^*/\wt\gg|_W$. 
\end{proposition}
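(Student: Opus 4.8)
The plan is to compute the low-degree generalized Spencer cohomology of $\wt\gg$ by reducing everything to the long exact sequence \eqref{sequenza} of Proposition \ref{sux} together with the known values of the usual Spencer cohomology $H^{p,q}(\wt\gg)$ recalled just above the statement, namely $H^{0,0}(\wt\gg)\simeq V$, $H^{0,1}(\wt\gg)\simeq\ggl(V)/\wt\gg$, and $H^{p,0}(\wt\gg)=H^{p,1}(\wt\gg)=0$ for $p\geq 1$. First I would pin down the two groups $H^{0,0}(\wt\gg,W)$ and $H^{0,1}(\wt\gg,W)$ directly from the definition: $C^{0,0}(\wt\gg,W)=\wt\gg^{-1}=V$ with $\partial=0$ acting into $C^{-1,1}=0$, and there is nothing mapping into it from $C^{1,-1}=0$, so $H^{0,0}(\wt\gg,W)\simeq V$. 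For $H^{0,1}(\wt\gg,W)$ one has $C^{0,1}(\wt\gg,W)=V\otimes W^*$, with $\partial:C^{1,0}(\wt\gg,W)=\wt\gg^0\to V\otimes W^*$ being (up to sign) the restriction-to-$W$ of the natural inclusion $\wt\gg\hookrightarrow\ggl(V)$ followed by the map $\ggl(V)\to V\otimes W^*$, which is injective on $\wt\gg$ since a linear map that kills $W$ and lies in $\ggl(V)\subset \wt\gg_\infty^0$ must be zero (here one uses $\dim W=4n-1\geq 7$, so $W$ contains an admissible frame spanning $V$ over $\bH$); the next differential $\partial:C^{0,1}\to C^{-1,2}=0$ is zero, so $H^{0,1}(\wt\gg,W)\simeq (V\otimes W^*)/\wt\gg|_W$, which is the claimed formula.

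Next I would run the long exact sequence to kill the higher groups. The relevant stretch of \eqref{sequenza}, for the pair $(p,q)$ with $q=0$, reads
\begin{equation*}
\cdots\to H^{p+1,-1}(\wt\gg,W)\to H^{p,-1}(\wt\gg,W)\to H^{p,0}(\wt\gg)\to H^{p,0}(\wt\gg,W)\to H^{p-1,0}(\wt\gg,W)\to H^{p-1,1}(\wt\gg)\to\cdots
\end{equation*}
Since $C^{p,-1}(\wt\gg,W)=0$ by definition, $H^{p,-1}(\wt\gg,W)=0$ for all $p$, and since $H^{p,0}(\wt\gg)=0$ and $H^{p-1,1}(\wt\gg)=0$ for $p\geq 1$ (using $p-1\geq 0$: for $p-1=0$ the relevant term is $H^{0,1}(\wt\gg)$, which is where I must be slightly careful), the sequence gives an isomorphism $H^{p,0}(\wt\gg,W)\simeq H^{p-1,1}(\wt\gg)$ coming from the connecting-type map for $p\geq 2$ and hence vanishing, while for $p=1$ I instead read off that $H^{1,0}(\wt\gg,W)$ sits in an exact sequence $0=H^{1,0}(\wt\gg)\to H^{1,0}(\wt\gg,W)\to H^{0,0}(\wt\gg,W)\to H^{0,1}(\wt\gg)$ and the last map $H^{0,0}(\wt\gg,W)=V\to H^{0,1}(\wt\gg)=\ggl(V)/\wt\gg$ is precisely the one induced by $\partial$ on the full complex, which is injective (a nonzero $v\in V$ gives, via first prolongation, a nonzero element of $\ggl(V)$ not in $\wt\gg$, by formula \eqref{monday}); hence $H^{1,0}(\wt\gg,W)=0$. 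An entirely parallel argument with $q=1$ gives $H^{p,1}(\wt\gg,W)\simeq H^{p-1,2}(\wt\gg)$-type maps; but in fact it is cleaner to use the portion $H^{p+1,0}(\wt\gg,W)\to H^{p,0}(\wt\gg,W)\to H^{p,1}(\wt\gg)\to H^{p,1}(\wt\gg,W)\to H^{p-1,1}(\wt\gg,W)\to H^{p-1,2}(\wt\gg)$, and feed in the vanishing of $H^{p,0}(\wt\gg,W)$ for $p\geq 1$ just established together with $H^{p,1}(\wt\gg)=0$ for $p\geq 1$: for $p\geq 1$ this forces $H^{p,1}(\wt\gg,W)\hookrightarrow H^{p-1,1}(\wt\gg,W)$, and iterating down to $H^{0,1}(\wt\gg,W)$ would not immediately give vanishing, so the correct move is to use instead the map landing in $H^{p,1}(\wt\gg)=0$ to get surjectivity of $H^{p,0}(\wt\gg,W)=0\to H^{p,1}(\wt\gg,W)$ onto the kernel of $H^{p,1}(\wt\gg,W)\to H^{p-1,1}(\wt\gg,W)$, combined with injectivity, yielding $H^{p,1}(\wt\gg,W)=0$ for $p\geq 1$ by an induction that bottoms out using $H^{0,1}(\wt\gg,W)$ only as a target, not needing its vanishing.

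Concretely, the induction I will actually carry out is: for $p\geq 1$, the segment
\begin{equation*}
H^{p,1}(\wt\gg)\longrightarrow H^{p,1}(\wt\gg,W)\longrightarrow H^{p-1,1}(\wt\gg,W)\longrightarrow H^{p-1,2}(\wt\gg)
\end{equation*}
has $H^{p,1}(\wt\gg)=0$, so $H^{p,1}(\wt\gg,W)\hookrightarrow H^{p-1,1}(\wt\gg,W)$; and the segment $H^{p,0}(\wt\gg,W)\to H^{p,1}(\wt\gg)\to H^{p,1}(\wt\gg,W)$ with $H^{p,0}(\wt\gg,W)=0$ and $H^{p,1}(\wt\gg)=0$ is vacuous, so I still need a lower bound. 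The honest route, which I expect to be the one actually used in the paper, is to combine $H^{p,1}(\wt\gg,W)\hookrightarrow H^{p-1,1}(\wt\gg,W)$ for all $p\geq 1$ with an independent direct check that $H^{1,1}(\wt\gg,W)=0$ — this is exactly Proposition \ref{co1ord}'s content for $p=1$ and is referenced in the text as the non-trivial vanishing behind the well-definedness of the essential curvatures — proved by hand from the definition of $\partial$ on $C^{1,1}(\wt\gg,W)=\wt\gg^0\otimes W^*$ and $C^{2,0}(\wt\gg,W)=\wt\gg^1=V^*$, using the explicit prolongation formula \eqref{monday} and the fact that $W$ spans $V$ over $\bH$ in ranks $\geq 2$; then the injections propagate the vanishing upward to all $p\geq 1$. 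The main obstacle is precisely this base case: showing directly that a $1$-cocycle in $\wt\gg^0\otimes W^*$ that restricts nontrivially is a coboundary of some element of $V^*$ amounts to inverting the prolongation map \eqref{monday} restricted to $W$, which requires a genuine (if short) quaternionic-linear-algebra computation and is where the hypothesis $4n-1\geq 7$, i.e.\ $n\geq 2$, is used essentially; once that is in place, the rest is formal diagram-chasing in \eqref{sequenza}.
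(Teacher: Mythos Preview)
Your treatment of $H^{0,0}(\wt\gg,W)$ and $H^{0,1}(\wt\gg,W)$ is fine and matches the paper. Beyond that, your argument contains confusions and one genuine gap.

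For $H^{p,0}(\wt\gg,W)$, the paper is much more direct than you are: since $\wt\gg^{p-1}=0$ for $p\geq 3$ and $\partial:\wt\gg\to V\otimes W^*$ is injective, only $p=2$ is nontrivial, and there the sequence $0=H^{2,0}(\wt\gg)\to H^{2,0}(\wt\gg,W)\to H^{1,0}(\wt\gg,W)=0$ finishes it. Your claimed isomorphism ``$H^{p,0}(\wt\gg,W)\simeq H^{p-1,1}(\wt\gg)$'' is not what the sequence gives (it gives $H^{p,0}(\wt\gg,W)\simeq H^{p-1,0}(\wt\gg,W)$ for $p\geq 2$), and your justification of the injectivity of $V\to\ggl(V)/\wt\gg$ via formula \eqref{monday} is misplaced: that formula concerns the prolongation $V^*\hookrightarrow\wt\gg\otimes V^*$, whereas the map in question is $v\mapsto v\otimes\rho^*\pmod{\wt\gg}$. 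The conclusion is correct, but the reason is that a rank-one real endomorphism cannot lie in $\ggl_n(\bH)\oplus\sp_1$.

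The real gap is $H^{1,1}(\wt\gg,W)=0$. You correctly isolate this as the crux and propose to ``invert the prolongation map \eqref{monday} restricted to $W$'' by a ``short quaternionic-linear-algebra computation,'' but you do not carry it out, and there is no reason to expect it to be short. The paper does \emph{not} proceed this way at all. Instead, it uses the segment
\[
0\longrightarrow H^{1,1}(\wt\gg,W)\longrightarrow H^{0,1}(\wt\gg,W)\overset{\varphi}{\longrightarrow} H^{0,2}(\wt\gg)\longrightarrow H^{0,2}(\wt\gg,W)\longrightarrow 0
\]
of \eqref{sequenza} together with the explicit $\gs$-module decompositions of $H^{0,2}(\wt\gg)$ and $H^{0,2}(\wt\gg,W)$ established later in \S\ref{anticipo} (Theorem \ref{speriamobene} and Proposition \ref{primedecomp}, used here as forward references). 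Comparing isotypic components shows $\Im\varphi$ is isomorphic to $H^{0,1}(\wt\gg,W)$ as an $\gs$-module, so $\varphi$ is injective and $H^{1,1}(\wt\gg,W)=0$. For $H^{2,1}(\wt\gg,W)$ the paper again uses \eqref{sequenza} together with Salamon's $H^{2,2}(\wt\gg)=0$, rather than your downward injection; your injection argument is valid but only once the base case $p=1$ is in hand.
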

\begin{proof}
It is easy to check directly from definitions that $H^{0,0}(\wt\gg,W)\simeq V$ and $H^{0,1}(\wt\gg,W)\simeq V\otimes W^*/\wt\gg|_W$.
The facts that $H^{p,0}(\wt\gg,W)=0$ for all $p\neq 0, 2$ and $H^{p,1}(\wt\gg,W)=0$ for all $p \geq 3$ follow also immediately from definitions.

The long exact sequence \eqref{sequenza} with $p=2$, $q=0$ gives
$$
0=H^{2,0}(\wt\gg)\longrightarrow H^{2,0}(\wt\gg,W)\longrightarrow H^{1,0}(\wt\gg,W)=0
$$
so that $H^{2,0}(\wt\gg,W)=0$. Similarly if $p=3$, $q=1$ we have
$$
0=H^{3,1}(\wt\gg,W)\longrightarrow H^{2,1}(\wt\gg,W)\longrightarrow H^{2,2}(\wt\gg)
$$
and $H^{2,1}(\wt\gg,W)=0$ too, as $H^{2,2}(\wt\gg)=0$ by \cite[Theorem 3.4]{Sal}. 

We are left with the vanishing of $H^{1,1}(\wt\gg,W)$. The proof of this fact is more involved and relies on some of the results of \S \ref{anticipo}, that is Theorem \ref{speriamobene} and Proposition \ref{primedecomp}, which give (after complexification) the following decompositions  into irreducible and inequivalent $\gs$-modules:
\begin{equation}
\label{eq:eq:eq1}
\begin{split}
H^{0,2}(\wt\gg,W)&\simeq \Lambda^2\tE^* S^2\tH+(\tD+\tE^*)S^3\tH\\
&\;\;\;\;+(\Ad+\Lambda^2\tE^*) S^4\tH+\tE^* S^5\tH\,,\\
H^{0,2}(\wt\gg)&\simeq  \tE^* \tH+
(\bC+\Ad+\Lambda^2 \tE^*) S^2\tH+
(2\tE^*+\tE+\tD) S^3\tH\\
&\;\;\;\;+ (\bC+\Ad+\Lambda^2\tE^*) S^4\tH+
\tE^* S^5\tH\,.
\end{split}
\end{equation}
Consider now the exact sequence
\begin{equation*}
0\longrightarrow H^{1,1}(\wt\gg,W)\longrightarrow H^{0,1}(\wt\gg,W)\stackrel{\varphi}{\longrightarrow} H^{0,2}(\wt\gg)\longrightarrow H^{0,2}(\wt\gg,W)\longrightarrow 0
\end{equation*}
given by \eqref{sequenza} with $p=1$, $q=1$. Since any representation of $\gs$ is completely reducible, equation
\eqref{eq:eq:eq1} together with $H^{0,2}(\wt\gg,W)\simeq H^{0,2}(\wt\gg)/\Im\varphi$ implies
\beq
\label{uf1}
\Im\varphi\simeq \tE^*\tH+(\bC+\Ad)S^2\tH+(\tE^*+\tE) S^3\tH+S^4\tH\ .
\eeq
However $H^{0,1}(\wt\gg,W)\simeq V\otimes W^*/\wt\gg|_W$ is also isomorphic with the right hand side of \eqref{uf1} (use that 
$\wt\gg\simeq \wt\gg_{|W}$) so that $\varphi$ is injective and $H^{1,1}(\wt\gg,W)=0$.
\end{proof}
\subsection{Induced \texorpdfstring{$G$}--structures and generalized integrability conditions}
\label{orders2}
Let $M$ be an almost CR quaternionic manifold and $\pi:P\to M$ its associated $G$-structure. 
Let also $\vartheta=(\vartheta^1,\ldots,\vartheta^{4n-1}):TP\rightarrow W$ be the so-called {\it soldering form} of $P$, defined by
$
\vartheta_u(v)=(v^1,\dots,v^{4n-1})
$,
where the $v^i$ are the components of the vector $\pi_*(v)\in T_{\pi(u)}M$ with respect to the frame $u=(e_i)$.
From the general results of \cite{AS} $P$ is locally immersible if and only if, for any local section $s:\cU\subset M\rightarrow P$ of $P$, there exist $1$-forms 
$$\omega^{p}:T\cU\rightarrow \wt\gg_\infty^{p}\;,\qquad -1\leq p\leq 2\,,$$ 
with
\beq
\label{MCC}
\!\!\omega^{\mathrm{-1}}=(s^*\vartheta^1,\ldots,s^*\vartheta^{4n-1},0)
\eeq
and the others satisfying the system of $3$ equations
\beq
\begin{split}
\label{gradiamoII}
&d\omega^{p-1}+\frac{1}{2}\sum_{r=0}^{p-1}[\omega^r,\omega^{p-1-r}]=-[\omega^{-1},\omega^p]\;,\;\; 0\leq p\leq 1\,,\\
&d\omega^{1}+\frac{1}{2}\sum_{r=0}^{1}[\omega^r,\omega^{1-r}]=0\ .
\end{split}
\eeq
The $G$-structure $P$ is called {\it locally immersibile up to order $p$} if, around any point, it admits a local section $s$ and forms $\omega^{-1},\dots,\omega^{p-1}$ which satisfy \eqref{MCC} and the first $p$ equations of \eqref{gradiamoII}. We stress the fact that, if \eqref{MCC} and the first $p$ equations of \eqref{gradiamoII} hold for a choice of $s$, then they hold for any other section of $P$ on the same open set $\cU$.

Any $p+1$-tuple $(s, \o^0, \ldots, \o^{p-1})$ as above is called {\it admissible} and the corresponding $\wt\gg_\infty^{p-1}$-valued $2$-form on the domain of $s$ given by
$$
\Omega^{p-1}=d\omega^{p-1}+\frac{1}{2}\sum_{r=0}^{p-1}[\omega^r,\omega^{p-1-r}]
$$
is the {\it total $(G, \wt G)$-curvature of order $p+1$ associated with $(s, \o^0, \ldots, \o^{p-1})$}. 

Assume $P$ is locally immersible up to order $p$ and, for any admissible $p+1$-tuple $(s,\omega^0,\ldots,\omega^{p-1})$ on some open set $\cU\subset M$, use the frames $s_x$, $x\in\cU$, to identify the tangent spaces $T_xM$ with $W$. By \cite[Theorem 3.4]{AS} the total $(G,\wt G)$-curvature tensors
$$\Omega^{p-1}|_x\in \wt\gg_\infty^{p-1}\otimes\Lambda^{2}T^*_xM\simeq C^{p,2}(\wt\gg,W)$$
satisfy $\partial (\Omega^{p-1}|_x)=0$. The quotient map
\beq
\label{leon}
\cR^{p+1}:\cU\rightarrow H^{p,2}(\wt\gg,W)\;,\;\;\;\;\;\cR^{p+1}_x:=[\Omega^{p-1}|_x]
\eeq
is called {\it essential $(G,\wt G)$-curvature of order $p+1$}. 

One can easily see from definitions that the essential $(G,\wt G)$-curvature $\cR^1$ is the cohomology class in $H^{0,2}(\wt\gg,W)$ of the torsion tensor associated with any linear connection on $\pi:P\to M$. One can therefore consider its
 vanishing as a generalization to the context of induced $G$-structures of the usual vanishing of the intrinsic torsion.
It also admits the following simple geometric interpretation: if the almost CR quaternionic structure is induced by a local immersion in a quaternionic manifold $\wt M$ then $\cR^1=0$. 

This can be seen as follows. Consider a local section $\wt s:\wt \cU\to \wt P$ of the $\wt G$-structure $\wt \pi:\wt P\to \wt M$ associated with $\wt M$. Without loss of generality we may assume that $$\wt s|_{\cU}:\cU\to \wt P\;,\qquad \cU=\wt\cU\cap M\,,$$ is a field of adapted frames with field of induced frames $s:\cU\to P$. The co-frames associated with $\wt s$ and $s$ are given by the local $1$-forms
$$\wt\omega^{-1}:T\wt\cU\longrightarrow V\qquad\text{and}\qquad \omega^{-1}=\wt\omega^{-1}|_{T\cU}:T\cU\longrightarrow W\ .$$
Since $\wt M$ is quaternionic there exists a torsion-free connection on $\wt P$. The corresponding connection $1$-form $\wt\omega^0:T\wt\cU\to \wt\gg$ satisfies
$
d\wt\omega^{-1}=-[\wt\omega^{-1},\wt\omega^0]
$. It follows that $d\omega^{-1}=-[\omega^{-1},\omega^0]
$ where $\omega^0=\wt\omega^0|_{T\cU}$ and hence $\cR^1=0$.

We now state the main result of \S \ref{orders2}. It is a direct consequence of above observations, Proposition \ref{co1ord}, the fact
that the $\wt G$-structure $\wt\pi:\wt P\longrightarrow\bH${\rm P}$^n$ is {\it flat $k$-reductive} with $k=2$ in the sense of \cite{AS} and of
\cite[Theorem 4.3]{AS}.
\begin{theorem}
\label{TeoremaRSF}
Let $M$ be an almost CR quaternionic manifold of dimension $4 n-1\geq 7$ and $\pi:P\rightarrow M$ its associated $G$-structure.
Then there exists a $G$-equivariant map $\cR^{1}:P\rightarrow H^{0,2}(\wt\gg, W)$ which vanishes if and only if $P$ is locally immersible up to first order. In particular if $P$ is induced by a local immersion $M\subset \wt M$ into a quaternionic manifold $\wt M$
then $\cR^1=0$. 

Assume $\cR^1=0$. Then there is a $G$-equivariant $\cR^{2}:P\rightarrow H^{1,2}(\wt\gg,W)$
which vanishes if and only if $P$ is locally immersible up to second order. Finally if 
$\cR^1=\cR^2=0$ there is a $G$-equivariant $\cR^{3}:P\rightarrow H^{2,2}(\wt\gg,W)$
and $P$ is locally immersible into $\wt\pi:\wt P\longrightarrow\bH${\rm P}$^n$ if and only if 
$\cR^1=\cR^2=\cR^3=0$. 
\end{theorem}
We observe that Theorem \ref{TeoremaRSF} says that the essential $(G,\wt G)$-curvatures are intrinsically defined for any almost CR quaternionic manifold. This is a non-trivial result, which depends on the vanishing of $H^{p,1}(\wt\gg,W)$ for all $p\geq 1$. For instance, in the classical case of Riemannian immersions, $G=\SO_{n}(\bR)$, $\wt G=\SO_{\wt n}(\bR)$ and
$H^{1,1}(\so_{\wt n}(\bR), \bR^n)\simeq\bR^{\wt n-n}\otimes S^2 (\bR^n)^*+\so(\bR^{\wt n-n})\otimes (\bR^n)^*$ \cite{AS}. This reflects the fact that the generalized $(G,\wt G)$-curvatures whose vanishing is equivalent to the Gauss-Codazzi-Ricci equations depend on the choice of a candidate second fundamental form and normal metric connection.

The main aim of next sections is to improve Theorem \ref{TeoremaRSF} with the explicit description of the groups $H^{p,2}(\wt\gg,W)$. In particular
$H^{2,2}(\wt\gg,W)=0$ so that the third obstruction $\cR^3$ is automatically trivial. From now on we will work exclusively over $\bC$ so that real vector spaces, Lie algebras and representations are tacitly complexified. We collect our conventions in the following.
\subsection{Conventions on complexifications}
\label{sec:conv}
We recall that there exists an isomorphism $
V\simeq \wt\tE\otimes \tH$ of $\wt \gg$-modules, $\wt \gg=\ggl_{2n}(\bC)\oplus\sl_{2}(\bC)$,
where $\wt\tE$ and $\tH$ are the defining representations of $\ggl_{2n}(\bC)$ and $\sl_{2}(\bC)$. 
If we consider $\wt\tE\simeq\bH^n$ and $\tH\simeq\bH$ with the complex structures $R_i:\wt\tE\rightarrow \wt\tE$ and $L_i:\tH\rightarrow\tH$, the underlying real representation of $\ggl_{n}(\bH)\oplus\sp_1$ is recovered as the fixed points set of the conjugation
$
\tau=R_j\otimes L_j:\wt\tE\otimes\tH\longrightarrow \wt\tE\otimes\tH
$. We also fix an $\sl_2(\bC)$-invariant symplectic form $\omega$ on $\tH$ satisfying
$\omega(L_jr,L_js)=-\overline{\omega(r,s)}$.

The Levi subalgebra \eqref{muccaII} of $\gg$ is given by
\begin{equation*}
\gs\simeq\sl_{2n-2}(\bC)\oplus\sl_{2}(\bC)\longrightarrow \wt\gg\;,
\end{equation*}
with the ``diagonal'' embedding of $\sl_2(\bC)$ in $\wt\gg$. Its action on $V$ is compatible with the decomposition \eqref{3dec}: there exists a decomposition $
\wt\tE=\tE+\tH$ of $\gs$-modules, where $\tE$ is the defining representation of $\sl_{2n-2}(\bC)$, and
\begin{equation}
\label{serve}
V\simeq \wt\tE\tH\simeq \tE\tH + \tH\tH\simeq \tE\tH+ S^2\tH +\bC\,,\qquad W\simeq \tE\tH+ S^2\tH\;.
\end{equation}

For our purposes it is convenient to read these facts inside the maximal transitive prolongation $\wt\gg_\infty\simeq \sl_{2n+2}(\bC)$ of $\wt\gg$. 
Let $\gt$ be the Cartan subalgebra of trace-free diagonal matrices of $\sl_{2n+2}(\bC)$, $\Phi$ the corresponding set of roots and
$\gg_\delta$ the root space of $\delta\in\Phi$. We fix a simple root system $\{\delta_1,\dots,\delta_{\ell}\}$ of $\Phi$, $\ell=2n+1$, 
and let $H_{\delta_i}\in\gt$ be the coroot associated to the simple root $\delta_i$. 

If we set $\operatorname{deg}(\sum_{i=1}^{\ell} c_i\delta_i)=c_2$ to be the degree of a root, the grading \eqref{maxtrans} corresponds to the decomposition
$\sl_{2n+2}(\bC)=\wt\tE\tH+\wt\gg+\wt\tE^*\tH$ where
\vskip0.1cm\par\noindent
\begin{align}
\label{eq:gradingprol}
\begin{cases}
\wt\gg=\mathfrak{t}+
\displaystyle\sum_{\substack{\delta\in\Phi\\\deg(\delta)=0}}\gg_\delta,\\
\wt\tE\tH=\;\!\!\!\displaystyle\sum_{\substack{\delta\in\Phi\\\deg(\delta)=-1}}\gg_\delta,\\
\wt\tE^*\tH=\,\displaystyle\sum_{\substack{\delta\in\Phi\\\deg(\delta)=+1}}\gg_\delta.
\end{cases}
\end{align}
\phantom{$$
\begin{tikzpicture}
\node[root]   (1)                     {};
\node[xroot] (2) [right=of 1] {} edge [-] (1);
\node[]   (3) [right=of 2] {$\;\cdots\,$} edge [-] (2);
\node[root]   (4) [right=of 3] {} edge [-] (3);
\node[root]   (5) [right=of 4] {} edge [-] (4);
\end{tikzpicture}
$$}
\vskip-0.3cm\par\noindent
In particular the simple ideals of $\wt\gg$ are given by 
\begin{align}
\label{ebbenesi}
\sl_2(\bC)&=\left\langle H_{\delta_1}, E_{\delta_1}, E_{-\delta_1}\right\rangle\\
\label{ebbeneno}
\sl_{2n}(\bC)&=\left\langle H_{\delta_3}, E_{\delta_3}, E_{-\delta_3},\dots,H_{\delta_{\ell}}, E_{\delta_{\ell}}, E_{-\delta_{\ell}}\right\rangle
\end{align}
with Cartan subalgebras generated  by $\{H_{\delta_1}\}$ and $\{H_{\delta_3},\dots,H_{\delta_{\ell}}\}$, respectively. 
On the other hand $\gs$  is the direct sum of a copy 
\begin{equation*}
\sl_2(\bC)=\left\langle H_{\delta_1}+H_{\delta_\ell}, E_{\delta_1}+E_{\delta_\ell},  E_{-\delta_1}+E_{-\delta_\ell}\right\rangle        
\end{equation*}
different from \eqref{ebbenesi} and the following Lie subalgebra of \eqref{ebbeneno}
\begin{equation*}
\sl_{2n-2}(\bC)=\left\langle H_{\delta_3}, E_{\delta_3}, E_{-\delta_3},\dots, H_{\delta_{\ell-2}}, E_{\delta_{\ell-2}}, E_{-\delta_{\ell-2}}\right\rangle\ .
\end{equation*}
Finally the decomposition \eqref{serve} is realized by
$$
\tE\tH=\left\langle E_{\delta}\,|\,\delta=c_1\delta_1+\cdots+c_{\ell}\delta_{\ell}\in\Phi^-\;\;\text{with}\;\; c_2=-1,\, c_{\ell-1}=c_{\ell}=0\right\rangle\,,\phantom{cc}
$$
$$
S^2\tH=\left\langle E_{-(\delta_1+\cdots+\delta_{\ell-1})}-E_{-(\delta_2+\cdots+\delta_\ell)}, E_{-(\delta_2+\cdots+\delta_{\ell-1})},\, E_{-(\delta_1+\cdots+\delta_\ell)} \right\rangle\,,\phantom{cccc}
$$
$$
\bC=\left\langle E_{-(\delta_1+\cdots+\delta_{\ell-1})}+E_{-(\delta_2+\cdots+\delta_\ell)}\right\rangle\,.\phantom{ccccccccccccccccccccccccccccci}
$$
We will use these facts several times in \S \ref{anticipo}, \S \ref{anticipoanticipo} and \S\ref{anticipoanticipoanticipo}.
We will also extensively use that the irreducible representations of $\sl_2(\bC)$ are (up to isomorphism) given by the symmetric powers $S^k\tH$ and that tensor products behave accordingly to the Clebsch-Gordan formula
$
S^j\tH\otimes S^k\tH\simeq \bigotimes_{r=0}^{\text{min}(j,k)}S^{j+k-2r}\tH\ .
$
On the other hand we will identify irreducible representations of (higher rank) semisimple Lie algebras with weights or Young diagrams. 
In particular any irreducible representation of $\sl_{2n}(\bC)\oplus\sl_2(\bC)$ is of the form $M\otimes S^k\tH$ for some irreducible representation $M$ of $\sl_{2n}(\bC)$ and a nonnegative integer $k$. If $M\subset \otimes^p\wt\tE\otimes^q\wt\tE^*$ with $p+q+k$ even, it is (the complexification of) a real representation of $\ggl_{n}(\bH)\oplus\sp_1$. 
\vskip0.2cm\par
\section{The essential \texorpdfstring{$(G,\wt G)$}--curvature \texorpdfstring{$\cR^1:P\rightarrow H^{0,2}(\wt\gg,W)$}{}}
\setcounter{section}{4}
\setcounter{equation}{0}
\label{anticipo} 
\vskip0.2cm\par
The main goal of this section is to describe the group $H^{0,2}(\wt\gg,W)$ and consider an application to the construction of homogeneous almost CR quaternionic manifolds. More precisely we have the following two results.
\begin{theorem}
\label{speriamobene}
The cohomology group $H^{0,2}(\wt\gg,W)$ decomposes into the direct sum 
$
H^{0,2}(\wt\gg,W)\simeq\Lambda^2\tE^* S^2\tH+(\tD+\tE^*)S^3\tH+(\Ad+\Lambda^2\tE^*) S^4\tH+\tE^* S^5\tH
$
of six irreducible and inequivalent $\gs$-modules.
\end{theorem}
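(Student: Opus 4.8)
The plan is to compute $H^{0,2}(\wt\gg,W)$ directly as a $\gs$-module by analyzing the Spencer differentials
$$
\wt\gg^{1}\otimes W^{*}\;\overset{\partial}{\longrightarrow}\;\wt\gg^{0}\otimes\Lambda^{2}W^{*}\;\overset{\partial}{\longrightarrow}\;\wt\gg^{-1}\otimes\Lambda^{3}W^{*}\,,
$$
with $\wt\gg^{-1}=V$, $\wt\gg^{0}=\wt\gg=\ggl_{n}(\bH)\oplus\sp_1$, $\wt\gg^{1}=V^{*}$, and then taking $\ker\partial/\operatorname{im}\partial$ in the middle term. First I would rewrite all the cochain spaces in terms of the $\gs\simeq\sl_{2n-2}(\bC)\oplus\sl_{2}(\bC)$-modules $\tE$ and $\tH$ using the decompositions $V\simeq\tE\tH+S^2\tH+\bC$ and $W\simeq\tE\tH+S^2\tH$ recorded in \eqref{serve}, together with the standard branching $\wt\gg\simeq\wt\gg_{|W}$ as $\gs$-modules. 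This turns $C^{1,2}$, $C^{0,2}$, $C^{-1,3}$ into explicit sums of boxes $I\otimes S^{k}\tH$; the $\sl_2(\bC)$ factors are handled by Clebsch--Gordan and the $\sl_{2n-2}(\bC)$ factors by classical Littlewood--Richardson / contraction arguments, isolating the irreducibles $\Lambda^2\tE^{*}$, $\tD$, $\tE^{*}$, $\Ad$ (kernels of natural contractions, as defined just before the statement).

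Next I would compute the image of $\partial\colon C^{1,2}\to C^{0,2}$ and the kernel of $\partial\colon C^{0,2}\to C^{-1,3}$. The key observation is that $\partial$ is $\gs$-equivariant (Definition \ref{RSCG} and the remark preceding Proposition \ref{sux}), so by Schur's lemma and complete reducibility of $\gs$-representations each isotypic component can be treated separately: $\partial$ restricted to any irreducible summand is either zero or an isomorphism onto its image. Thus the whole computation reduces to determining, summand by summand, whether the relevant component of the Spencer operator vanishes. For this I would use the maximal transitive prolongation $\wt\gg_\infty\simeq\sl_{2n+2}(\bC)$ and the explicit root-space description \eqref{seiI}--\eqref{seiII}: the bracket $[\,\cdot\,,\,\cdot\,]$ appearing in $\partial$ is just the $\sl_{2n+2}(\bC)$-bracket, so whether a highest-weight cochain is a cocycle or a coboundary can be checked by a finite root computation on highest weight vectors. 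An efficient alternative, and the one I would actually write up, is to combine the already-known usual Spencer cohomology $H^{0,2}(\wt\gg)$ (quoted from \cite{Sal}, and further decomposed as $\gs$-module in Proposition \ref{primedecomp}) with the long exact sequence \eqref{sequenza}: in degrees $(p,q)$ with $p=0$, $q=2$ and $p=1$, $q=1$ one gets
$$
H^{1,1}(\wt\gg,W)\longrightarrow H^{0,1}(\wt\gg,W)\overset{\varphi}{\longrightarrow} H^{0,2}(\wt\gg)\longrightarrow H^{0,2}(\wt\gg,W)\longrightarrow H^{-1,2}(\wt\gg,W)=0\,,
$$
so that $H^{0,2}(\wt\gg,W)\simeq H^{0,2}(\wt\gg)/\operatorname{im}\varphi$. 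Since $H^{0,1}(\wt\gg,W)\simeq V\otimes W^{*}/\wt\gg_{|W}$ is an explicit (small) $\gs$-module, one only has to identify $\operatorname{im}\varphi\subset H^{0,2}(\wt\gg)$, again component by component via Schur.

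The main obstacle is precisely pinning down $\operatorname{im}\varphi$ (equivalently, the rank of the middle map in the exact sequence): a priori $\varphi$ could be non-injective, and one must rule out that its kernel is non-trivial while simultaneously matching multiplicities in $H^{0,2}(\wt\gg)$ against those in $H^{0,1}(\wt\gg,W)$. This is where the circular-looking dependence with Proposition \ref{co1ord} is resolved: one argues that $H^{1,1}(\wt\gg,W)$ and $H^{0,1}(\wt\gg,W)$ have the same dimension forcing $\varphi$ to be injective, hence $\operatorname{im}\varphi\simeq H^{0,1}(\wt\gg,W)$, and then subtracts, using complete reducibility and the inequivalence of the summands, to read off
$$
H^{0,2}(\wt\gg,W)\simeq\Lambda^2\tE^{*}S^2\tH+(\tD+\tE^{*})S^3\tH+(\Ad+\Lambda^2\tE^{*})S^4\tH+\tE^{*}S^5\tH\,.
$$
A secondary technical point I expect to be delicate is keeping track of which $\sl_{2n-2}(\bC)$-irreducibles appear once the natural contractions $\tE\otimes\Lambda^2\tE^{*}\to\tE^{*}$ and $\tE\otimes S^2\tE^{*}\to\tE^{*}$ are imposed — distinguishing $\tD$, $\tC$, $\Ad$ and the extra copies of $\tE^{*}$ — and verifying that in the final answer no $\tC$ or $\bC$ summand survives; this requires the low-rank exceptional identifications to be handled carefully, which is why the hypothesis $4n-1\geq 7$ (i.e. $n\geq 2$) enters.
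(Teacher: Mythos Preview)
Your proposal has a genuine gap at the crucial step, and also differs from the paper's route in a way worth noting.

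First a minor point: the three-term complex you write at the outset is shifted by one. By Definition~\ref{RSCG}, $C^{0,2}(\wt\gg,W)=\wt\gg^{-1}\otimes\Lambda^2W^*=V\otimes\Lambda^2W^*$, not $\wt\gg^0\otimes\Lambda^2W^*$; the relevant complex is
\[
C^{1,1}(\wt\gg,W)=\wt\gg\otimes W^*\;\overset{\partial}{\longrightarrow}\;V\otimes\Lambda^2W^*\;\overset{\partial}{\longrightarrow}\;0,
\]
since $\wt\gg^{-2}=0$. In particular every cochain in degree $(0,2)$ is a cocycle and $H^{0,2}(\wt\gg,W)=V\otimes\Lambda^2W^*/B^{0,2}(\wt\gg,W)$.

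The serious problem is your resolution of the circularity with Proposition~\ref{co1ord}. You write that ``$H^{1,1}(\wt\gg,W)$ and $H^{0,1}(\wt\gg,W)$ have the same dimension forcing $\varphi$ to be injective''. This is false on both counts: by Proposition~\ref{co1ord} one has $H^{1,1}(\wt\gg,W)=0$ while $H^{0,1}(\wt\gg,W)\simeq V\otimes W^*/\wt\gg|_W$ is nonzero, so the dimensions do not agree; and even if they did, equality of dimensions of the first two terms in an exact sequence says nothing about injectivity of the map out of the second. The logical dependence in the paper runs in the \emph{opposite} direction: Theorem~\ref{speriamobene} is proved first, independently of the exact sequence, and only afterwards is it fed into \eqref{speriamo} to conclude $H^{1,1}(\wt\gg,W)=0$. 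Your argument, as written, assumes what it is meant to establish.

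The paper's actual strategy bypasses the exact sequence entirely at this stage. It uses the surjective $\gs$-equivariant restriction $V\otimes\Lambda^2V^*\to V\otimes\Lambda^2W^*$ together with the identity $B^{0,2}(\wt\gg,W)=B^{0,2}(\wt\gg)|_{\Lambda^2W}$. Since $B^{0,2}(\wt\gg)$ is known as a $\wt\gg$-module from \cite{Sal} (see \eqref{falsi}), one decomposes it into irreducible $\gs$-modules (Proposition~\ref{primedecomp}) and then determines, component by component, which ones survive restriction to $\Lambda^2W$. This last step is the real work: it is carried out by exhibiting explicit $\gs$-equivariant immersions $\imath_1,\ldots,\imath_{17}$ into $B^{0,2}(\wt\gg)$ and checking on decomposable tensors that their restrictions to $\Lambda^2W$ are nonzero and linearly independent. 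Comparing the resulting $B^{0,2}(\wt\gg,W)$ with the decomposition of $V\otimes\Lambda^2W^*$ then yields the statement. If you want to salvage your exact-sequence approach, you would need an independent proof that $\varphi$ is injective (equivalently, that $H^{1,1}(\wt\gg,W)=0$), and that is not easier than the direct computation.
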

\begin{theorem}
\label{thm:deformazioni}
There exists a $1$-parameter family of almost CR quaternionic structures $\pi:P_t\to M$, $t\geq 0$, on the quaternionic Heisenberg group with $P_t$ isomorphic to the standard quaternionic contact structure only at $t=0$.
\end{theorem}
\subsection{The main result}
Our strategy to prove Theorem \ref{speriamobene} is the following. 
First the space
of generalized $(0,2)$-cocycles coincides with $V\otimes\Lambda^2 W^*$
so that $H^{0,2}(\wt\gg,W)=V\otimes\Lambda^2 W^*/B^{0,2}(\wt\gg,W)$. Consider then
the restriction operator $\res:V\otimes\Lambda^2V^*\to V\otimes \Lambda^2 W^*$;
one easily checks from definitions that it is surjective, $\gs$-equivariant and satisfies
$
B^{0,2}(\wt\gg,W)=\res(B^{0,2}(\wt\gg))$. It follows that there exist appropriate irreducible $\gs$-modules contained in $B^{0,2}(\wt\gg)$ which are faithfully preserved by $\res$. We will decompose $B^{0,2}(\wt\gg)$ into irreducible $\gs$-submodules, determine $B^{0,2}(\wt\gg,W)$ and, in turn, the group $H^{0,2}(\wt\gg,W)$.

The $\wt\gg$-module structure of $H^{0,2}(\wt\gg)$ is well-known. If $\wt\tC$ and $\wt\tD$ are the kernels of the natural contractions $\wt\tE\otimes S^2\wt\tE^*\rightarrow \wt\tE^*$ and $\wt\tE\otimes \Lambda^2\wt\tE^*\rightarrow \wt\tE^*$ then:
\begin{align}
V\otimes \Lambda^2 V^*&\simeq \wt\tE \tH\otimes \Lambda^2(\wt\tE^*\tH)\simeq\wt\tE\tH\otimes (S^2 \wt\tE^*\Lambda^2\tH+\Lambda^2\wt\tE^* S^2\tH)\notag\\
&\simeq (\wt\tE S^2\wt\tE^*\otimes \tH)+(\wt\tE\Lambda^2 \wt\tE^*\otimes \tH S^2\tH)\notag\\
&\simeq (\wt\tE^*+ \wt\tC)\tH+(\wt\tE^*+ \wt\tD)\tH S^2\tH\label{falsi}\\
&\simeq 2\wt\tE^*\tH+\wt\tC\tH+\wt\tD\tH+ \wt\tE^* S^3\tH+ \wt\tD S^3\tH\notag\\
B^{0,2}(\wt\gg)&\simeq 2\wt\tE^*\tH+\wt\tC\tH+\wt\tD\tH+ \wt\tE^* S^3\tH\notag
\end{align}
so that $H^{0,2}(\wt\gg)\simeq\wt\tD S^3\tH$ (cf. \cite[Proposition 2.2]{Sal}). We now collect a series of intermediate useful results.
\begin{proposition}
\label{primedecomp}
The  decompositions into irreducible and inequivalent $\gs$-modules of $\Lambda^2 W^*$, $W\otimes \Lambda^2 W^*$, $V\otimes \Lambda^2 W^*$,  $B^{0,2}(\wt\gg)$, 
the kernels $\wt\tC$ and $\wt\tD$ of the natural contractions $\wt\tE\otimes S^2\wt\tE^*\rightarrow \wt\tE^*$, $\wt\tE\otimes \Lambda^2\wt\tE^*\rightarrow \wt\tE^*$ and $\wt\tD S^3\tH$ are: 
\vskip0.3cm\par
\begin{align*}
\Lambda^2 W^*\simeq &S^2\tE^*+\tE^*\tH+(\Lambda^2\tE^*+\bC)S^2\tH+\tE^* S^3\tH\;,\\
W\otimes \Lambda^2 W^*\simeq & (2\bC+\Ad+\Lambda^2 \tE^*)+(\tE+\tD+\tC+4\tE^*)\tH+\\&
(3\bC+2\Ad+\Lambda^2 \tE^*+ S^2\tE^*) S^2 \tH+(\tE+\tD+3\tE^*)S^3\tH+\\
&(2\bC+\Ad+\Lambda^2\tE^*)S^4\tH+\tE^* S^5\tH\;,\\
V\otimes \Lambda^2 W^*\simeq & (2\bC+\Ad+\Lambda^2 \tE^*+S^2\tE^*)+(\tE+\tD+\tC+5\tE^*)\tH+\\&
(4\bC+2\Ad+2\Lambda^2 \tE^*+ S^2\tE^*) S^2 \tH+(\tE+\tD+4\tE^*)S^3\tH+\\
&(2\bC+\Ad+\Lambda^2\tE^*)S^4\tH+\tE^* S^5\tH\;,\\
\wt\tC\simeq &(\tE^*+\tC)+(S^2\tE^*+\bC+\Ad)\tH+(\tE^*+\tE)S^2\tH +S^3\tH\;,\\
\wt\tD\simeq  &(\tE^*+\tE+\tD)+
(\Lambda^2\tE^*+\bC+\Ad)\tH+
\tE^* S^2\tH\;,\\
B^{0,2}(\wt\gg)\simeq & (S^2 \tE^*+4\bC+2\Ad+\Lambda^2 \tE^*)+(6\tE^*+\tC+2\tE+\tD)\tH+\\
& (S^2\tE^*+6\bC+2\Ad+\Lambda^2\tE^*)S^2\tH+(3\tE^*+\tE)S^3\tH+2 S^4\tH\;,\\
\wt\tD S^3\tH\simeq & \tE^* \tH+(\bC+\Ad+\Lambda^2 \tE^*) S^2\tH+(2\tE^*+\tE+\tD) S^3\tH+\\
&(\bC+\Ad+\Lambda^2\tE^*) S^4\tH+\tE^* S^5\tH\;.
\end{align*}
\end{proposition}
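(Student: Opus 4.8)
The plan is to reduce all seven of these decompositions to an explicit branching computation from $\sl_{2n}(\bC)\oplus\sl_2(\bC)$ down to $\gs=\sl_{2n-2}(\bC)\oplus\sl_2(\bC)$, the only non-formal input being the $\gs$-module splitting $\wt\tE\simeq\tE+\tH$ of \eqref{nonnon} (diagonal in the $\sl_2$-factor), together with the $\wt\gg$-module structure of $B^{0,2}(\wt\gg)$ recorded in \eqref{falsi} and of $H^{0,2}(\wt\gg)\simeq\wt\tD S^3\tH$, both taken from \cite{Sal}. The toolkit is elementary: the Clebsch--Gordan rule for $\sl_2(\bC)$, the self-duality of $\sl_2(\bC)$-modules (so that $S^k\tH^*\simeq S^k\tH$ and, by \eqref{serve}, $W^*\simeq\tE^*\tH+S^2\tH$), and the standard tensor identities $\tE\otimes\tE^*\simeq\bC+\Ad$, $\tE\otimes S^2\tE^*\simeq\tE^*+\tC$, $\tE\otimes\Lambda^2\tE^*\simeq\tE^*+\tD$ for $\sl_{2n-2}(\bC)$ (with the convention that a summand which is forced to vanish is dropped in the low-rank cases).

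For the first three modules I would start from $W\simeq\tE\tH+S^2\tH$, $V\simeq W+\bC$ (both from \eqref{serve}) and $W^*\simeq\tE^*\tH+S^2\tH$. Then $\Lambda^2 W^*$ follows from $\Lambda^2(A\oplus B)\simeq\Lambda^2 A\oplus(A\otimes B)\oplus\Lambda^2 B$, the identity $\Lambda^2(U\otimes\tH)\simeq S^2 U\otimes\Lambda^2\tH\oplus\Lambda^2 U\otimes S^2\tH$, and $\Lambda^2(S^2\tH)\simeq S^2\tH$. Tensoring the resulting decomposition by $W\simeq\tE\tH+S^2\tH$ and applying the $\sl_{2n-2}(\bC)$-identities on the $\tE$-slots and Clebsch--Gordan on the $\tH$-slots gives $W\otimes\Lambda^2 W^*$, and finally $V\otimes\Lambda^2 W^*\simeq(W\otimes\Lambda^2 W^*)\oplus\Lambda^2 W^*$ is obtained for free, which also serves as a convenient internal check.

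For $\wt\tC$ and $\wt\tD$ I would restrict the $\sl_{2n}(\bC)$-module identities $\wt\tE\otimes S^2\wt\tE^*\simeq\wt\tE^*\oplus\wt\tC$ and $\wt\tE\otimes\Lambda^2\wt\tE^*\simeq\wt\tE^*\oplus\wt\tD$ to $\gs$: from $\wt\tE\simeq\tE+\tH$ one reads off $S^2\wt\tE^*\simeq S^2\tE^*+\tE^*\tH+S^2\tH$, $\Lambda^2\wt\tE^*\simeq\Lambda^2\tE^*+\tE^*\tH+\bC$ and $\wt\tE^*\simeq\tE^*+\tH$; expanding the tensor products as above and subtracting $\tE^*+\tH$ yields the stated $\gs$-decompositions. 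Then $B^{0,2}(\wt\gg)$ is obtained by substituting $\wt\tE^*\simeq\tE^*+\tH$ together with the just-computed $\wt\tC$, $\wt\tD$ into $B^{0,2}(\wt\gg)=2\wt\tE^*\tH+\wt\tC\tH+\wt\tD\tH+\wt\tE^* S^3\tH$ and reapplying Clebsch--Gordan, and $\wt\tD S^3\tH$ by tensoring the $\gs$-decomposition of $\wt\tD$ with $S^3\tH$.

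The only real difficulty is bookkeeping: multiplicities must be tracked accurately through the many Clebsch--Gordan expansions, and one must check that no dual modules $\tC^*$, $\tD^*$, nor symmetric or exterior powers of $\tE$ itself, ever appear --- and indeed they never do, precisely because in every product the starred factors $\tE^*$, $S^2\tE^*$, $\Lambda^2\tE^*$ descending from $W^*$ (resp.\ $\wt\tE^*$) always meet only the unstarred $\tE$ descending from $W$ (resp.\ $\wt\tE$). To guard against slips I would cross-check each line by a dimension count (using $\dim\wt\tC$, $\dim\wt\tD$, $\dim(V\otimes\Lambda^2 W^*)=4n\binom{4n-1}{2}$ and $\dim B^{0,2}(\wt\gg)$ from \cite{Sal}) and by the two built-in relations $V\otimes\Lambda^2 W^*\simeq(W\otimes\Lambda^2 W^*)\oplus\Lambda^2 W^*$ and $\dim B^{0,2}(\wt\gg)+\dim(\wt\tD S^3\tH)=\dim(V\otimes\Lambda^2 V^*)$, the latter of which ties the $B^{0,2}(\wt\gg)$ and $\wt\tD S^3\tH$ computations to each other. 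Finally, the irreducibility and pairwise inequivalence of all summands --- needed for the subsequent Theorem \ref{speriamobene} --- is read off directly from the highest weights that occur.
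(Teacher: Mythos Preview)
Your proposal is correct and follows essentially the same route as the paper: the paper also starts from $W^*\simeq\tE^*\tH+S^2\tH$, expands $\Lambda^2 W^*$, tensors by $W$ and then adds $\Lambda^2 W^*$ to pass to $V$, obtains $\wt\tC$ and $\wt\tD$ by expanding $\wt\tE\otimes S^2\wt\tE^*$ (resp.\ $\wt\tE\otimes\Lambda^2\wt\tE^*$) via $\wt\tE\simeq\tE+\tH$ and subtracting $\wt\tE^*$, and then plugs these into Salamon's formula \eqref{falsi} for $B^{0,2}(\wt\gg)$ and tensors $\wt\tD$ with $S^3\tH$. Your added dimension cross-checks are not in the paper but are a sensible safeguard.
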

\begin{proof}
We recall that $W^*\simeq \tE^*\tH+ S^2\tH$. The first decomposition is a consequence of $\Lambda^2 W^*\simeq \Lambda^2(\tE^*\tH)+\Lambda^2(S^2\tH)+\tE^*\tH S^2\tH$, $\Lambda^2 (S^2\tH)\simeq S^2\tH$ and it directly implies
the decomposition of $W\otimes \Lambda^2 W^*$. 

The third decomposition follows from $V\otimes\Lambda^2 W^*\simeq\Lambda^2 W^*+W\otimes\Lambda^2 W^*$ while
that of $\wt\tC\simeq (\wt\tE S^2\wt\tE^*)/ \wt\tE^*$ from identity
$$
\wt\tE S^2\wt\tE^*\simeq (2\tE^*+\tC)+(S^2\tE^*+2\bC+\Ad)\tH+(\tE^*+\tE)S^2\tH+S^3\tH\;.
$$
The proof for $\wt\tD$ is similar. Finally the decomposition of $B^{0,2}(\wt\gg)$ follows from those of $\wt\tC$, $\wt\tD$:
\begin{align*}
B^{0,2}(\wt\gg)& \simeq 2\wt\tE^*\tH+\wt\tC\tH+\wt\tD\tH+\wt\tE^* S^3\tH\\ & \simeq 2\bC+(2\tE^*+\wt\tC+\wt\tD)\tH+3S^2\tH+\tE^* S^3\tH+S^4\tH
\end{align*}
and that of $\wt\tD S^3\tH$ directly from the decomposition of $\wt\tD$.
\end{proof}
\begin{proof}[Proof of Theorem \ref{speriamobene}]
The proof split into several steps.

(i) Since $\res:V\otimes\Lambda^2V^*$ to $V\otimes \Lambda^2 W^*$ is $\gs$-equivariant and surjective, standard properties of representation theory of semisimple Lie algebras and Proposition \ref{primedecomp} immediately yield
\begin{equation}
\label{eq:firstsummary}
\begin{split}
B^{0,2}(\wt\gg,W)\supset &(2\bC+\Ad+\Lambda^2 \tE^*+S^2\tE^*)+(\tE+\tD+\tC)\tH+\phantom{cccccccc}\\
&(\Lambda^2\tE^*+S^2\tE^*)S^2 \tH\;,\\
H^{0,2}(\wt\gg,W)\supset& \Lambda^2\tE^* S^2\tH+\tD S^3\tH+
(\Ad+\Lambda^2\tE^*) S^4\tH+\tE^* S^5\tH\ .
\end{split}
\end{equation}
We however require  a deeper analysis to determine the extent to which the remaining $\gs$-submodules of $V\otimes\Lambda^2 W^{*}$,
$$5\tE^*\tH\;,\;\; (4\bC+2\Ad) S^2 \tH\;,\;\; (\tE+4\tE^*) S^3\tH\;,\;\; 2 S^4\tH\;,$$
either belong to $B^{0,2}(\wt\gg,W)$ or contribute nontrivially to $H^{0,2}(\wt\gg,W)$. A second look at Proposition \ref{primedecomp} tells us that at least one $\tE^* S^3\tH$ has to contribute to the cohomology. We will actually see that $B^{0,2}(\wt\gg,W)$ contains the full isotipic component in $V\otimes\Lambda^2 W^{*}$ of $\tE^*\tH$, $S^2\tH$, $\Ad S^2 \tH$, $\tE S^3\tH$, $S^4\tH$ but three copies of $\tE^* S^3\tH$.

In order to specify $\gs$-equivariant immersions, we consider the projectors 
$\xi:\wt\tE\rightarrow \tE$ and $\psi:\wt\tE\rightarrow \tH$
associated with the decomposition $\wt\tE=\tE+\tH$. We define immersions of $\wt\tE^*$ into $\wt\tE\otimes S^2\wt\tE^*$ and $\wt\tE\otimes \Lambda^2\wt\tE^*$ by
letting $f\in \wt\tE^*$ act on $x,y\in\wt\tE$ by $f(x)y\pm f(y)x$ and, in a similar way, also immersions of $\tE^*$ in $\tE\otimes S^2\tE^*$ and $\tE\otimes \Lambda^2\tE^*$. Finally we describe the elements of $\wt\tD\subset \wt\tE\otimes\Lambda^2 \wt\tE^*$ directly by their action on decomposable tensors and consider the immersions $\beta_1:\tE^*\rightarrow \wt\tD$, $\beta_2:\tH\rightarrow \wt\tD$ and $\beta_3:\tE^* S^2\tH\rightarrow \wt\tD$ defined by
\begin{align*}
\beta_1(e)&=e(x)(\xi-\frac{2n-3}{2}\psi)y-e(y)(\xi-\frac{2n-3}{2}\psi)x\,,\\
\beta_2(h)&=h(x)((2n-2)\psi-\xi)y-h(y)((2n-2)\psi-\xi)x\,,\\
\beta_3(e\odot^2h)&=[e(x)h(y)-e(y)h(x)] h\,,
\end{align*}
where $e\in\tE^*$, $h\in\tH$.

(ii) Consider the $\wt\gg$-submodule $\wt\tE^*\tH+\wt\tC\tH\simeq\wt\tE\tH\otimes S^2\wt\tE^*\Lambda^2\tH\subset V\otimes \Lambda^2 V^*$. We see from \eqref{falsi}   that it is contained in $B^{0,2}(\wt\gg)$  and
\begin{align*}
\wt\tE\tH\otimes S^2\wt\tE^*\Lambda^2\tH\simeq & (2\bC+\Ad+S^2 \tE^*)+(\tE+\tC+3\tE^*)\tH +\\
& (3\bC+\Ad+S^2\tE^*)S^2 \tH+(\tE+\tE^*)S^3\tH + S^4\tH\ .\\
\end{align*}
\vskip-0.4cm\noindent
As advertised before, we are interested only in the $\gs$-isotipic components of $\tE^*\tH$, $S^2\tH$, $\Ad S^2 \tH$, $\tE S^3\tH$, $\tE^* S^3\tH$ and $S^4\tH$. We again describe elements in the images of the immersions
\begin{align*}
\imath_1:&\tE^*\tH\rightarrow \tE\tH\otimes S^2\tE^*\Lambda^2\tH\;,\;
&\imath_2&:\tE S^3\tH\rightarrow \tE\tH\otimes S^2\tH\Lambda^2\tH\;\phantom{cccccccccccccccccccccccccccccccccccccccc}\\
\imath_3:&S^2\tH\rightarrow \tE\tH\otimes (\tE^*\odot\tH)\Lambda^2\tH\;,\;
&\imath_4&:\Ad S^2\tH\rightarrow \tE\tH\otimes (\tE^*\odot\tH)\Lambda^2\tH\;\\
\imath_5:&S^4\tH\rightarrow S^2\tH \otimes S^2\tH\Lambda^2\tH\;,\;
&\imath_6&:S^2\tH\rightarrow S^2\tH \otimes S^2\tH\Lambda^2\tH\;\\
\imath_7:&\tE^* S^3\tH\rightarrow S^2\tH \otimes (\tE^*\odot\tH)\Lambda^2\tH\;,\;\hskip-0.4cm
&\imath_8&:\tE^*\tH\rightarrow S^2\tH \otimes (\tE^*\odot\tH)\Lambda^2\tH\;\\
\imath_9:&\tE^*\tH\rightarrow \bC \otimes (\tE^*\odot \tH)\Lambda^2\tH\;,\;
&\imath_{10}&: S^2\tH\rightarrow \bC\otimes S^2\tH \Lambda^2\tH\;\\
\end{align*}
\vskip-0.5cm\par\noindent
directly by their action on decomposable tensors:
\begin{align*}
&\imath_1(e\otimes h)=(e(x)\xi y+e(y)\xi x)\otimes h \omega(r,s)\;,\\
&\imath_2(e^*\odot^3h)= e^*h(x)h(y)\otimes h\omega(r,s)\;,\\
&\imath_3(\odot^2 h)=(h(x)\xi y+h(y)\xi x) \otimes h \omega(r,s)\;,\\
&\imath_4(A\odot^2 h)=(h(x)A(\xi y)+h(y)A(\xi x)) \otimes h \omega(r,s)\;,\\
&\imath_5(\odot^4 h)=h(x)h(y)h \otimes h\, \omega(r,s)\;,\\
&\imath_6(\odot^2 h)=\ad_{\odot^2 h}(\psi x \odot\psi y)\omega(r,s)\;,\\
&\imath_7(e\odot^3 h)=(e(x)h(y)+e(y) h(x)) h \otimes h\, \omega(r,s)\;,\\
&\imath_8(e\otimes h)=[e(x)(\psi y\otimes h-h(y)/2)+e(y)(\psi x\otimes h-h(x)/2)] \omega(r,s)\;,\\
&\imath_9(e\otimes h)=[e(x)h(y)+e(y) h(x)]\, \omega(r,s)\; , \\
&\imath_{10}(\odot^2 h)=h(x)h(y)\, \omega(r,s)\; ,
\end{align*}
where $x,y\in\wt\tE$ and $r,s\in \tH$. Recall the decomposition \eqref{3dec} (see also \eqref{serve}). The following Table \ref{table:action} summarizes
the relevant information on the elements of $\Im(\imath_i)$, for each $i=1,\ldots,10$. The row ``domain'' displays the subspace of $\Lambda^2 V$ where the (non-zero) elements of $\Im(\imath_i)$ acts non trivially, the row ``codomain'' the subspace of $V$ where elements take their values. 
\begin{table}[H]
\begin{centering}
\makebox[\textwidth]{
\begin{tabular}{|c|c|c|c|c|c|}
\hline
$i$ & $1$ & $2$ & $3$ & $4$ & $5$\\
\hline
&&&&&\\[-3mm]
\text{domain} & $\Lambda^2 U$ & $\Lambda^2 U^\perp$ & $U\wedge U^\perp$ & $U\wedge U^\perp$ & $\Lambda^2 U^\perp$
\\
\hline
\text{codomain} & $U$ & $U$ & $U$ & $U$ & $U^\perp$
\\
\hline
\hline
$i$ & $6$ & $7$ & $8$ & $9$ & $10$ \\
\hline
&&&&&\\[-3mm]
\text{domain} & $\Lambda^2 U^\perp$  & $U\wedge U^\perp$ & $U\wedge U^\perp$ & $U\wedge U^\perp$ & $\Lambda^2 U^\perp$
\\
\hline
\text{codomain} & $U^\perp$  & $U^\perp$ & $U^\perp$ & $W^\perp$ & $W^\perp$
\\
\hline
\end{tabular}}
\end{centering}
\caption[]{Elements of $\Im(\imath_i)\subset V\otimes\Lambda^2 V^*$.\label{table:action}} \vskip14pt
\end{table}
\vskip-0.7cm\par\noindent
It follows that 
\begin{align*}
B^{0,2}(\wt\gg,W)&\supset \bigoplus_{i=1}^{10} \res(\Im(\imath_i))\\
&\simeq 3\tE^*\tH+(3\bC+\Ad)S^2\tH+(\tE+\tE^*)S^3\tH+S^4\tH\;.
\end{align*}
We now perform a similar analysis for the other $\wt\gg$-submodules of $V\otimes\Lambda^2 V^*$; we will not give all the details as in step (ii) but just the main points.

(iii) Consider $\wt\tE^*\tH+\wt\tD\tH+\wt\tE^* S^3\tH\subset \wt\tE\tH\otimes \Lambda^2\wt\tE^* S^2\tH^*\subset V\otimes\Lambda^2 V^*$. 
It is contained in $B^{0,2}(\wt\gg)$ by \eqref{falsi} and decomposes into $\gs$-modules as follows:
\begin{align*}
\wt\tE^*\tH+\wt\tD\tH+\wt\tE^* S^3\tH\simeq & (2\bC+\Ad+\Lambda^2 \tE^*)+(\tE+\tD+3\tE^*)\tH +\\
& (3\bC+\Ad+\Lambda^2\tE^*)S^2 \tH+2\tE^* S^3\tH +S^4\tH\ .\\
\end{align*}
\vskip-0.4cm\par\noindent
Let $\imath_{11}:S^4\tH\rightarrow \wt\tE^* S^3\tH$ and 
$\imath_{12}:\Ad S^2\tH\rightarrow \tD\tH$ be the immersions  defined by
\vskip-0.15cm\par
\begin{align*}
&\imath_{11}(\odot^4 h)=(h(x)y-h(y)x)\otimes h\omega(h,r)\omega(h,s)\;,\\
&\imath_{12}(A\odot^2 h)=[A(x)h(y)-A(y)h(x)]\otimes [\omega(h,r)s+\omega(h,s)r]\;.\\
\end{align*}\vskip-0.4cm\par\noindent
The modules $\res(\Im(\imath_{11}))$ and $\res(\Im(\imath_{12}))$ are non-zero and have trivial intersection with, respectively, $\res(\Im(\imath_5))$ and $\res(\Im(\imath_4))$. Hence
\begin{align*}
B^{0,2}(\wt\gg,W)&\supset \bigoplus_{i=1}^{12} \res(\Im(\imath_i))\\
&\simeq 3\tE^*\tH+(3\bC+2\Ad)S^2\tH+(\tE+\tE^*)S^3\tH+2S^4\tH\, .
\end{align*}

(iv) We determine appropriate additional submodules 
$2\tE^*\tH$, $S^2\tH$ and $2\tE^* S^3\tH$ of $B^{0,2}(\wt\gg,W)$. We first single out $S^2\tH$. Consider the immersions of $S^2\tH$ in $\wt\tE^* \tH$
and $\wt\tD\tH$ given by $\odot^2 h\mapsto [h(x)y-h(y)x]\otimes [\omega(h,r)s+\omega(h,s)r]$ and, respectively,  $\odot^2 h\mapsto\beta_2(h)(x,y)\otimes [\omega(h,r)s+\omega(h,s)r]$ and compute their linear combination 
\begin{equation*}
\imath_{13}(\odot^2 h)=[h(x)\xi(y)-h(y)\xi(x)]\otimes [\omega(h,r)s+\omega(h,s)r]\ .
\end{equation*}
The module $\res(\Im(\imath_{13}))$ is non-zero, contained in $B^{0,2}(\wt\gg,W)$ and with trivial intersection with $\res(\Im(\imath_{3}))\oplus\res(\Im(\imath_{6}))\oplus\res(\Im(\imath_{10}))$.

We single out two submodules of type $\tE^*\tH$. Consider
$\imath_{14}:\tE^*\tH\rightarrow\wt\tE^*\tH$ and $\imath_{15}:\tE^*\tH\rightarrow\wt\tD\tH$ given by
\vskip-0.15cm\par
\begin{align*}
&\imath_{14}(e\otimes h)=(e(x)y-e(y)x)\otimes(\omega(h,r)s+\omega(h,s)r)\;,\\
&\imath_{15}(e\otimes h)=\beta_1(e)(x,y)\otimes(\omega(h,r)s+\omega(h,s)r)\;.\\
\end{align*}\vskip-0.3cm\par\noindent
Their linear combinations $\frac{2}{2n-1}(\imath_{14}-\imath_{15})$ and $\frac{2n-3}{2n-1}\imath_{14}+\frac{2}{2n-1}\imath_{15}$
determine two non-intersecting submodules of $B^{0,2}(\wt\gg,W)$ whose direct sum does also not intersect $\res(\Im(\imath_{1}))\oplus\res(\Im(\imath_{8}))\oplus\res(\Im(\imath_{9}))$.

Finally the immersions $\imath_{16}: \tE^* S^3\tH\rightarrow \wt\tE^* S^3\tH$ and $\imath_{17}: \tE^* S^3\tH\rightarrow \wt\tD\tH$ defined by
\vskip-0.15cm\par
\begin{align*}
&\imath_{16}(e\odot^3 h)=[e(x)y-e(y)x] \otimes h\omega(h,r)\omega(h,s)\,,\\
&\imath_{17}(e\odot^3 h)=\beta_3(e\odot^2h)(x,y)\otimes [\omega(h,r)s+\omega(h,s)r]\;,\\
\end{align*}\vskip-0.4cm\par\noindent
determine, together with $\res(\Im(\imath_{7}))$, three copies of $\tE^* S^3\tH$ in $B^{0,2}(\wt\gg,W)$. Summarizing
\begin{align*}
B^{0,2}(\wt\gg,W)&\supset \bigoplus_{i=1}^{17} \res(\Im(\imath_i))\\
&\simeq 
5\tE^*\tH+(4\bC+2\Ad)S^2\tH+(\tE+3\tE^*)S^3\tH+2S^4\tH\,
\end{align*}
and moreover
\begin{align*}
B^{0,2}(\wt\gg,W)\supset &(2\bC+\Ad+\Lambda^2 \tE^*+S^2\tE^*)+(\tE+\tD+\tC+5\tE^*)\tH+\phantom{cccccccc}\\
&(\Lambda^2\tE^*+S^2\tE^*+4\bC+2\Ad)S^2 \tH+(\tE+3\tE^*)S^3\tH+2S^4\tH\,,\\
H^{0,2}(\wt\gg,W)\supset& \Lambda^2\tE^* S^2\tH+(\tD+\tE^*)S^3\tH+
(\Ad+\Lambda^2\tE^*) S^4\tH+\tE^* S^5\tH\;,
\end{align*}
by equation \eqref{eq:firstsummary}. Using that $H^{0,2}(\wt\gg,W)\simeq V\otimes\Lambda^2 W^*/B^{0,2}(\wt\gg,W)$ and the decomposition of 
$V\otimes\Lambda^2 W^*$ given in Proposition \ref{primedecomp} one directly sees that both these inclusions are actually equalities.
\end{proof}
\subsection{An application of Theorem \ref{speriamobene}}
\label{esempi!}
Let $M$ be the connected, simply connected real Lie group associated with a $\bZ$-graded Lie algebra $\gm$ of the form 
$\gm=\gm_{-1}+\gm_{-2}$, where $\gm_{-1}=U=\bH^{n-1}$ and $\gm_{-2}=U^\perp=\Im(\bH)$. 
Any such $\gm=\gm_{-1}+\gm_{-2}$ is $2$-step nilpotent and therefore uniquely determined by the (real) tensor $L$ in
$
U^\perp\otimes\Lambda^2U^*\simeq\Lambda^2 \tE^*+(\Lambda^2\tE^*+S^2\tE^*)S^2\tH+\Lambda^2\tE^* S^4\tH
$
which gives the Lie bracket of two elements in $\gm_{-1}$.
We remark that from the differentiable point of view $M$ is just the $4n-1$-dimensional quaternionic Heisenberg group. In particular it admits the standard quaternionic contact structure given by the immersion as a real hypersurface of $\bH$P$^n$ and
invariant under the action of the usual Lie group multiplication on $M$. It is not difficult to see that this multiplication is determined by an element $L_o\in\Lambda^2\tE^*$.
We will now see that there exist also other homogeneous almost CR quaternionic structures on $M$, which correspond to appropriate choices of $L$ (and hence to different group multiplications on $M$). 

Fix a Lie algebra structure on $\gm$ determined by an element $L\in U^\perp\otimes\Lambda^2U^*$ and consider the natural $\{e\}$-structure (absolute parallelism) $\pi:P'\to M$ determined by the left-invariant vector fields of $M$. We define the {\it associated almost CR quaternionic structure} as
union of $G$-orbits $\pi:P=P'{\cdot}G\to M$.

The essential $(G,\wt G)$-curvature of $P$ can be easily described. Consider the global section $s:M\rightarrow P'\subset P$ determined by left-invariant vector fields
so that the associated $1$-form \eqref{MCC} given by
$$\omega^{-1}=(s^*\vartheta^1,\ldots,s^*\vartheta^{4n-1},0):TM\longrightarrow W=\gm$$
coincides with the Maurer-Cartan form of $M$. Using the Maurer-Cartan equation, the total $(G,\wt G)$-curvature is the left-invariant $\gm$-valued $2$-form
$$
\Omega^{-1}=d\omega^{-1}=-\frac{1}{2}[\omega^{-1},\omega^{-1}]\;.
$$ 
It follows that the $G$-equivariant essential curvature $\cR^1:P\to H^{0,2}(\wt\gg,W)$ is constant on $s(M)\subset P$ and naturally identifiable with the cohomology class in $H^{0,2}(\wt\gg,W)$ of $L$. Theorem \ref{thm:deformazioni} follows then from the following.
\begin{theorem}\label{seraven}
Let $\pi:P\to M$ be the almost CR quaternionic structure on the quaternionic Heisenberg group $M$ associated with $L\in U^\perp\otimes\Lambda^2 U^*$. Then
$\cR^1:P\to H^{0,2}(\wt\gg,W)$ is nonzero if and only if $L$ has a non-trivial component in $\Lambda^2 \tE^*S^2\tH+\Lambda^2\tE^*S^4\tH$ and, in this case,
$P$ is not induced by a local immersion in a quaternionic manifold and it is not quaternionic contact. Moreover
there exists a $1$-parameter family of almost CR quaternionic structures $\pi:P_t\to M$, $t\geq 0$, such that $P_t$ is isomorphic to the standard quaternionic contact structure only at $t=0$.
\end{theorem}
\begin{proof} 
By Theorem \ref{speriamobene} the unique module of type $\Lambda^2 \tE^* S^4\tH\subset V\otimes \Lambda^2 W^*$ is not included in $B^{0,2}(\wt\gg,W)$. This copy contributes therefore to $H^{0,2}(\wt\gg,W)$ and necessarily coincides with the copy $\Lambda^2 \tE^* S^4\tH\subset U^\perp\otimes\Lambda^2 U^*$.

On the other hand there are two copies of $\Lambda^2 \tE^* S^2\tH$ in $V\otimes \Lambda^2 W^*$. One is included in
$\res(\wt\tD S^3\tH)$ and contributes to cohomology, the other is included in $\res(\wt \tD\tH)\subset B^{0,2}(\wt\gg,W)$. It is not difficult to check that $\Lambda^2 \tE^* S^2\tH\subset U^\perp\otimes\Lambda^2 U^*$ has a non-trivial projection to $H^{0,2}(\wt\gg,W)$.
Finally the submodules $\Lambda^2 \tE^*$ and $S^2\tE^*S^2\tH$ do not contribute to cohomology by Theorem \ref{speriamobene}. This exhausts the four irreducible components of $U^\perp\otimes\Lambda^2 U^*$.

Our claims are a direct consequence of above observations, Theorem \ref{TeoremaRSF} and the fact 
that any quaternionic contact manifold admits an immersion in a quaternionic manifold (see \cite{Duc2}). The family is the $1$-parameter family of almost CR quaternionic structures associated to $L_t:=L_o+t L$, where $L$ is any nonzero element in $\Lambda^2 \tE^*S^2\tH+\Lambda^2 \tE^*S^4\tH$.
\end{proof}
\vskip0.3cm\par\noindent
\section{The essential \texorpdfstring{$(G,\wt G)$}--curvature \texorpdfstring{$\cR^3$}{} vanishes}
\setcounter{section}{5}
\setcounter{equation}{0}
\label{anticipoanticipo}
The main aim of this section is to improve Theorem \ref{TeoremaRSF} of \S\ref{orders} as follows.
\begin{theorem}
\label{improve}
The $G$-structure $\pi:P\to M$ is locally immersible into $\wt\pi:\wt P\to\bH\mathrm{P}^n$
if and only if it is locally immersible up to second order.
\end{theorem}
The proof of Theorem \ref{improve} is based on the following property:
{\it the generalized cohomology group $H^{2,2}(\wt\gg,W)$ vanishes}. To prove this, note that $\wt\gg_\infty^{2}=0$ and $H^{3,2}(\wt\gg,W)=0$ so that we can consider \eqref{sequenza} with $p=3$, $q=2$ and get the exact sequence
$0\longrightarrow H^{2,2}(\wt\gg,W)\longrightarrow H^{2,3}(\wt\gg)$.
Theorem \ref{improve} holds if we show $H^{2,3}(\wt\gg)=0$. 
We remark that it is more convenient to prove $H^{2,3}(\wt\gg)=0$ rather than directly showing $H^{2,2}(\wt\gg,W)=0$ with analogues of the techniques used in \S \ref{anticipo}. This has a double motivation.

On a one hand there is no need of an explicit description of the isotopic components of $H^{2,2}(\wt\gg,W)$, as we are going to show that it vanishes.
\par
On the other hand $H^{2,3}(\wt\gg)=0$ is a stronger result than $H^{2,2}(\wt\gg,W)=0$, to prove which one can exploit $\wt\gg$-equivariance (instead of simple $\gs$-equivariance) and the machinery of the Kostant version of the Borel-Bott-Weil Theorem. 
We recall here only the facts that we need in the form suitable for our purposes and refer to \cite{Kos} for more details (see also e.g. \cite{Y}).

Let $\wt\gg_\infty=\bigoplus\wt\gg_\infty^p$ be the $\bZ$-grading \eqref{maxtrans} 
of $\wt\gg_\infty=\sl_{2n+2}(\bC)$ with $\wt\gg_\infty^0=\wt\gg$ (recall also \eqref{eq:gradingprol}) and 
$E\in\wt\gg_\infty$ the associated grading element $[E,X]=p X$ for all $X\in\wt\gg_\infty^p$.
Let $\gt$, $\Phi$ be also as in \S \ref{sec:conv} and denote by $\Delta=\{\alpha_1,\dots,\alpha_\ell\}$ the simple root system of $\wt\gg_\infty$
opposite to $\{\delta_1,\dots,\delta_{\ell}\}$, i.e. with $\alpha_i=-\delta_i$ for all $i=1,\ldots,\ell$. The choice is made in such a way that $\alpha_2(E)=-1$ and $\alpha_i(E)=0$ if $i\neq 2$ (cf. \cite{Kos}). Finally we also set $\Phi_p=\left\{\alpha\in\Phi\,|\,\alpha(E)=p\right\}$.

Let $W$ be the Weyl group  of $\Phi$ and for any $\sigma\in W$ set $\Phi_{\sigma}=\Phi^+\cap\sigma(\Phi^-)$.
The {\it Hasse diagram} $W^0\subset W$ of $\wt\gg_\infty=\bigoplus\wt\gg_\infty^p$ is defined by 
$$W^0=\left\{\sigma\in W\,|\,\Phi_\sigma\cap \Phi_0=\emptyset\right\}$$ 
and the decomposition of the Spencer cohomology $H^{\bullet,\bullet}(\wt\gg)$ into $\wt\gg$-irreducible modules is given by
$$
H^{\bullet,\bullet}(\wt\gg)=\bigoplus_{\sigma\in W^0} H^\sigma\,,$$
where $H^\sigma$ is the irredubile $\wt\gg$-module with highest weight $\xi_\sigma=\sigma(\theta)-\left\langle\Phi_\sigma\right\rangle$.
Here $\theta$ is the highest root of $\wt\gg_\infty$, $\left\langle A\right\rangle=\sum_{\alpha\in A}\alpha$ for any subset $A\subset \Phi$
and weights act on $\gt$, a Cartan subalgebra also for the reductive Lie algebra $\wt\gg$.

Moreover, for any non-negative integer $q$,
$$
H^{\bullet,q}(\wt\gg)=\bigoplus_{\sigma\in W^0(q)} H^\sigma\,,
$$
where $W(q)=\{\sigma\in W\,|\,|\Phi_{\sigma}|=q\}$ and $W^0(q)=W^0\cap W(q)$. We recall that the cardinality $|\Phi_\sigma|$ of $\Phi_\sigma$ coincides with
the lenght $\ell(\sigma)$ of the reflection $\sigma\in W$ (see e.g. \cite{Humph}). Finally if 
$\sigma\in W^0(q)$ with
$\Phi_\sigma=\{\beta_1,\dots,\beta_q\}$ then  $H^\sigma\subset H^{p,q}(\wt\gg)$ where $p$ is deductible from the following identity (see \cite{Y}):
\beq
\label{3app}
\sigma(\theta)(E)=\sum_{i=1}^q \beta_i(E)+p+q-1\ .
\eeq

The following result determines $H^{\bullet,3}(\wt\gg)$. Case $p=2$ implies Theorem \ref{improve} and case $p=0$ is relevant in \S \ref{anticipoanticipoanticipo}; case $p=1$ is added only for completeness.
\begin{proposition}
\label{quelloprima}
The cohomology group $H^{2,3}(\wt\gg)$ is trivial. The groups $H^{1,3}(\wt\gg)$ and $H^{0,3}(\wt\gg)$ are $\sl_2(\bC)\oplus\sl_{2n}(\bC)$-irreducible modules isomorphic with
\begin{align*}
& \y(\ \ \ \ ,\ \ \ ) &  &\phantom{ccccccccccccccccccc}\; \y(\ \ ,\ )   \\
H^{1,3}(\wt\gg)\simeq\tH\;\otimes\;\; & \,\vdots\phantom{cccc}\vdots & \phantom{ccccccc} \text{and} \phantom{ccccccc} & H^{0,3}(\wt\gg)\simeq S^4\tH\;\otimes\;\; \vdots\phantom{i}\vdots \\
& \y(\ \ \ ,\ \ ) &  &\phantom{ccccccccccccccccccc}\;  \y(\ ) \\
\end{align*}
where the first (resp. second) Young diagram has $2n-3$ (resp. $2n-4$) rows of the form $\y(\ \ \ )$ (resp. $\y(\ )$). 
\end{proposition}
\begin{proof}
Let $\sigma_i=\sigma_{\alpha_i}$ be the reflection in $\left\langle\Phi\right\rangle_{\bR}=\operatorname{span}_{\bR}\{\alpha_1,\ldots,\alpha_\ell\}$ associated with $\alpha_i\in \Delta$ and $\sigma_{ijk}=\sigma_i{\cdot}\sigma_j{\cdot}\sigma_k$ the composition of three simple reflections. Since
\begin{align*}
\sigma_i:
\begin{cases}
\alpha_i\rightarrow -\alpha_i\\
\alpha_{i-1}\rightarrow \alpha_{i-1}+\alpha_i\\
\alpha_{i+1}\rightarrow \alpha_{i+1}+\alpha_i\\
\alpha_j\rightarrow\alpha_j\;\;\text{otherwise}
\end{cases}
\end{align*}
one gets that the set $W(3)=\left\{\sigma\in W\,|\,\ell(\sigma)=3\right\}$ of lenght 3 elements in the Weyl group  decomposes into $W(3)=W'(3)\cup W''(3)$, 
$$W'(3)=\{\sigma_{ijk}\,|\,i\neq j, j\neq k, k\neq i\}\;\;,\qquad W''(3)=\{\sigma_{iji}\,|\,i\neq j, \left\langle \alpha_i,\alpha_j\right\rangle \neq 0\}\,,$$ 
where as usual $\left\langle \alpha_i,\alpha_j\right\rangle=2(\alpha_i,\alpha_j)/(\alpha_j,\alpha_j)$ and 
$(\cdot,\cdot)$ is the positive definite scalar product on $\left\langle \Phi\right\rangle_{\bR}$ induced by the Killing form.

In order to determine the Hasse diagram $W^0(3)\subset W(3)$ we first consider $\sigma=\sigma_{ijk}\in W'(3)$ and distinguish different possible cases, depending on the mutual position of $\alpha_i,\alpha_j,\alpha_k$:
\begin{align*}
&\cdot\;\;&\begin{tikzpicture}
\node[iroot]  (2) [right=of 1] {};
\node[jroot]   (3) [right=of 2] {} edge [-] (2);
\node[kroot] (4) [right=of 3] {} edge [-] (3);
\end{tikzpicture}\quad&\text{and}\quad \Phi_{\sigma^{-1}}=\{\alpha_{i+2},\alpha_{i+1}+\alpha_{i+2},\alpha_i+\alpha_{i+1}+\alpha_{i+2}\}\,,&\phantom{ccccccccccccccccccccccccccccccccc}\\
&\cdot\;\;&\begin{tikzpicture}
\node[iroot]  (2) [right=of 1] {};
\node[kroot]   (3) [right=of 2] {} edge [-] (2);
\node[jroot] (4) [right=of 3] {} edge [-] (3);
\end{tikzpicture}\quad&\text{and}\quad \Phi_{\sigma^{-1}}=\{\alpha_{i+1},\alpha_i+\alpha_{i+1},\alpha_{i+1}+\alpha_{i+2}\}\,,&\phantom{ccccccccccccccccccccccccccccccccc}\\
&\cdot\;\;&\begin{tikzpicture}
\node[kroot]  (2) [right=of 1] {};
\node[jroot]   (3) [right=of 2] {} edge [-] (2);
\node[iroot] (4) [right=of 3] {} edge [-] (3);
\end{tikzpicture}\quad&\text{and}\quad \Phi_{\sigma^{-1}}=\{\alpha_{i-2},\alpha_{i-2}+\alpha_{i-1},\alpha_{i-2}+\alpha_{i-1}+\alpha_i\}\,,&\phantom{ccccccccccccccccccccccccccccccccc}\\
&\cdot\;\;&\begin{tikzpicture}
\node[kroot]  (2) [right=of 1] {};
\node[iroot]   (3) [right=of 2] {} edge [-] (2);
\node[jroot] (4) [right=of 3] {} edge [-] (3);
\end{tikzpicture}\quad&\text{and}\quad \Phi_{\sigma^{-1}}=\{\alpha_{i-1},\alpha_{i+1},\alpha_{i-1}+\alpha_i+\alpha_{i+1}\}\,,&\phantom{ccccccccccccccccccccccccccccccccc}\\
\end{align*}
\vskip-1.2cm
\begin{align*}
&\cdot\;\;&\begin{tikzpicture}
\node[iroot]  (2) [right=of 1] {};
\node[root] (3) [right=of 2] {} edge [-] (2);
\node[jroot]   (4) [right=of 3] {} edge [-] (3);
\node[kroot] (5) [right=of 4] {} edge [-] (4);
\end{tikzpicture}\quad&\text{and}\quad  \Phi_{\sigma^{-1}}=\{\alpha_{i},\alpha_{i+3},\alpha_{i+2}+\alpha_{i+3}\}\,,&\phantom{ccccccccccccccccccccccccccccccccccccccccccccccccccccccccccccccccccccccccccccccccccccccccccccccccccccccccccccccccccccccccccccccccccccc}\\
&\cdot\;\;&\begin{tikzpicture}
\node[iroot]  (2) [right=of 1] {};
\node[root] (3) [right=of 2] {} edge [-] (2);
\node[kroot]   (4) [right=of 3] {} edge [-] (3);
\node[jroot] (5) [right=of 4] {} edge [-] (4);
\end{tikzpicture}\quad&\text{and}\quad \Phi_{\sigma^{-1}}=\{\alpha_{i},\alpha_{i+2},\alpha_{i+2}+\alpha_{i+3}\}\,,&\phantom{ccccccccccccccccccccccccccccccccccccccccccccccccccccccccccccccccc}\\
&\cdot\;\;&\begin{tikzpicture}
\node[iroot]  (2) [right=of 1] {};
\node[jroot] (3) [right=of 2] {} edge [-] (2);
\node[root]   (4) [right=of 3] {} edge [-] (3);
\node[kroot] (5) [right=of 4] {} edge [-] (4);
\end{tikzpicture}\quad&\text{and}\quad \Phi_{\sigma^{-1}}=\{\alpha_{i+1},\alpha_{i}+\alpha_{i+1},\alpha_{i+3}\}\,,&\phantom{cccccccccccccccccccccccccccccccccccccccccccccccccccccccccccccc}\\
&\cdot\;\;&\begin{tikzpicture}
\node[jroot]  (2) [right=of 1] {};
\node[iroot] (3) [right=of 2] {} edge [-] (2);
\node[root]   (4) [right=of 3] {} edge [-] (3);
\node[kroot] (5) [right=of 4] {} edge [-] (4);
\end{tikzpicture}\quad&\text{and}\quad \Phi_{\sigma^{-1}}=\{\alpha_{i-1},\alpha_{i-1}+\alpha_{i},\alpha_{i+2}\}\,,&\phantom{cccccccccccccccccccccccccccccccccccccccccccccccccccccccccccccccccccccccccccccccccc}\\
\end{align*}
\vskip-1.2cm
\begin{align*}
&\cdot\;\;&\begin{tikzpicture}
\node[iroot]  (2) [right=of 1] {};
\node[root] (3) [right=of 2] {} edge [-] (2);
\node[]   (4) [right=of 3] {$\;\cdots\,$} edge [-] (3);
\node[root] (5) [right=of 4] {} edge [-] (4);
\node[jroot]   (6) [right=of 5] {} edge [-] (5);
\node[kroot] (7) [right=of 6] {} edge [-] (6);
\end{tikzpicture}\quad&\text{and}\quad  \Phi_{\sigma^{-1}}=\{\alpha_{i},\alpha_{j+1},\alpha_{j}+\alpha_{j+1}\}\,,&\phantom{ccccccccccccccccccccccccccccccccc}\\
&\cdot\;\;&\begin{tikzpicture}
\node[iroot]  (2) [right=of 1] {};
\node[root] (3) [right=of 2] {} edge [-] (2);
\node[]   (4) [right=of 3] {$\;\cdots\,$} edge [-] (3);
\node[root] (5) [right=of 4] {} edge [-] (4);
\node[kroot]   (6) [right=of 5] {} edge [-] (5);
\node[jroot] (7) [right=of 6] {} edge [-] (6);
\end{tikzpicture}\quad&\text{and}\quad  \Phi_{\sigma^{-1}}=\{\alpha_{i},\alpha_{j-1},\alpha_{j-1}+\alpha_{j}\}\,,&\phantom{ccccccccccccccccccccccccccccccccc}\\
&\cdot\;\;&\begin{tikzpicture}
\node[iroot]  (2) [right=of 1] {};
\node[jroot] (3) [right=of 2] {} edge [-] (2);
\node[root]   (4) [right=of 3] {} edge [-] (3);
\node[]   (5) [right=of 4] {$\;\cdots\,$} edge [-] (4);
\node[root]   (6) [right=of 5] {} edge [-] (5);
\node[kroot] (7) [right=of 6] {} edge [-] (6);
\end{tikzpicture}\quad&\text{and}\quad \Phi_{\sigma^{-1}}=\{\alpha_{i+1},\alpha_{i}+\alpha_{i+1},\alpha_{k}\}\,,&\phantom{ccccccccccccccccccccccccccccccccc}\\
&\cdot\;\;&\begin{tikzpicture}
\node[jroot]  (2) [right=of 1] {};
\node[iroot] (3) [right=of 2] {} edge [-] (2);
\node[root]   (4) [right=of 3] {} edge [-] (3);
\node[]   (5) [right=of 4] {$\;\cdots\,$} edge [-] (4);
\node[root]   (6) [right=of 5] {} edge [-] (5);
\node[kroot] (7) [right=of 6] {} edge [-] (6);
\end{tikzpicture}\quad&\text{and}\quad \Phi_{\sigma^{-1}}=\{\alpha_{i-1},\alpha_{i-1}+\alpha_{i},\alpha_{k}\}\,,&\phantom{ccccccccccccccccccccccccccccccccc}\\
\end{align*}
\vskip-1.3cm
\begin{align*}
\cdot\;\;\;\;\left\langle \alpha_i,\alpha_j\right\rangle=\left\langle \alpha_j,\alpha_k\right\rangle=\left\langle \alpha_k,\alpha_i\right\rangle=0\;\;\;\text{and}\;\;\;\Phi_{\sigma^{-1}}=\{\alpha_i,\alpha_j,\alpha_k\}\ .\phantom{cccccccccccccccccccccccccccccccccccccccccccccccccc}
\end{align*}
\vskip0.1cm\par\noindent
On the other hand if $\sigma=\sigma_{iji}\in W''(3)$ there are just two possibilities:
\begin{align*}
&\cdot\;\;&\begin{tikzpicture}
\node[iroot]  (2) [right=of 1] {};
\node[jroot]   (3) [right=of 2] {} edge [-] (2);
\end{tikzpicture}\;\;\;&\text{and}&\;\; \Phi_{\sigma^{-1}}=\{\alpha_i,\alpha_{i+1},\alpha_i+\alpha_{i+1}\}\,,&\phantom{ccccccccccccccccccccccccccccccccc}\\
&\cdot\;\;&\begin{tikzpicture}
\node[jroot]  (2) [right=of 1] {};
\node[iroot]   (3) [right=of 2] {} edge [-] (2);
\end{tikzpicture}\;\;\;&\text{and}&\;\; \Phi_{\sigma^{-1}}=\{\alpha_{i-1},\alpha_{i},\alpha_{i-1}+\alpha_{i}\}\,.&\phantom{ccccccccccccccccccccccccccccccccc}\\
\end{align*}
\vskip-0.4cm
\noindent
It follows that the roots $\sigma^{-1}\in W(3)$ satisfying 
$$\Phi_{\sigma^{-1}}\subset\Phi^+\backslash\Phi^+_0=\{\alpha=\sum_{i=1}^\ell n_i\alpha_i\in\Phi\,|\,n_i\in\bN\;\text{and}\;n_2\neq 0\}$$
are exactly
\begin{align*}
&\sigma^{-1}=\sigma_{231}\;\;\;\;\;\text{with}\;\;\; \Phi_{\sigma_{231}}=\{\alpha_2,\alpha_1+\alpha_2,\alpha_2+\alpha_3\}\;,\\
&\sigma^{-1}=\sigma_{234}\;\;\;\;\;\text{with}\;\;\; \Phi_{\sigma_{234}}=\{\alpha_2,\alpha_2+\alpha_3,\alpha_2+\alpha_3+\alpha_4\}\;.\\
\end{align*}
\vskip-0.5cm
\par\noindent
In other words we just proved that $W^0(3)=\{\sigma_{231},\sigma_{234}\}$. 

Now equation \eqref{3app} together with
$$\sigma_{231}(\theta)=\sum_{i=3}^{\ell}\alpha_i\;\;\;,\;\;\;\;\;\;\sigma_{234}(\theta)=\theta\;\;\;,$$
yields
$$
0=\sigma_{231}(\theta)(E)=-3+p+3-1\;\;\;\text{and}\;\;\;-1=\sigma_{234}(\theta)(E)=-3+p+3-1
$$
and therefore  
$$H^{2,3}(\wt\gg)=0\;\;,\;\;\;H^{1,3}(\wt\gg)\simeq H^{\sigma_{231}}\;\;,\;\;\;H^{0,3}(\wt\gg)\simeq H^{\sigma_{234}}\,,$$ 
where $H^{\sigma}$ is $\sl_2(\bC)\oplus\ggl_{2n}(\bC)$-irreducible of highest weight $\xi_{\sigma}=\sigma(\theta)-\left\langle\Phi_\sigma\right\rangle$.
A simple computation shows
 $$\xi_{\sigma_{231}}=-\alpha_1-3\alpha_2+\sum_{i=4}^{\ell}\alpha_i\;\;\;\;\text{and}\;\;\;\;\xi_{\sigma_{234}}=\alpha_1-2\alpha_2-\alpha_3+\sum_{i=5}^{\ell}\alpha_i\ .$$
Of course $\xi_{\sigma_{231}}$ and $\xi_{\sigma_{234}}$ are lowest weights for the simple system $\{\delta_1,\ldots,\delta_\ell\}$. We now wish to express $H^{\sigma_{231}}$ and $H^{\sigma_{234}}$ as highest weight modules w.r.t. the simple root system $\{\delta_1;\delta_3,\dots,\delta_\ell\}$ of the semisimple part $\sl_2(\bC)\oplus\sl_{2n}(\bC)$ of $\sl_2(\bC)\oplus\ggl_{2n}(\bC)$ (recall also \eqref{ebbenesi} and \eqref{ebbeneno}). This means that
$$(\ell-1)\delta_1+2\sum_{i=2}^{\ell}(\ell+1-i)\delta_i=0$$
and, using the longest element of the Weyl group of $\sl_2(\bC)\oplus\sl_{2n}(\bC)$, we infer that
$H^{1,3}(\wt\gg)$ and $H^{0,3}(\wt\gg)$ are standard cyclic modules with highest weights
\beq
\label{conferma}
\frac{1}{2}\delta_1+\sum_{i=3}^{\ell-1}\frac{\ell+3i-7}{\ell-1}\delta_{i}+3\frac{\ell-2}{\ell-1}\delta_\ell\,,
\eeq
and respectively
\beq
\label{conferma2}
2\delta_1+\sum_{i=3}^{\ell-2}\frac{\ell+2i-5}{\ell-1}\delta_{i}+2\frac{\ell-3}{\ell-1}\delta_{\ell-1}+\frac{\ell-3}{\ell-1}\delta_\ell\,.
\eeq
In terms of the fundamental weight $\{\omega_1\}$ of $\sl_2(\bC)$ and the fundamental weights $\{\omega_3,\dots,\omega_{\ell}\}$ of $\sl_{2n}(\bC)$, one gets that \eqref{conferma} and \eqref{conferma2} coincide with
$$
\omega_1+(\omega_{3}+\omega_{\ell-1}+2\omega_{\ell})\;\;\;\;\;\text{and}\;\;\;\;\;
4\omega_1+(\omega_3+\omega_{\ell-2})\;,
$$
which is the claim of the proposition.
\end{proof}
\vskip0.2cm\par
\section{The essential \texorpdfstring{$(G,\wt G)$}--curvature \texorpdfstring{$\cR^2:P\to H^{1,2}(\wt\gg,W)$}{}}
\setcounter{section}{6}
\setcounter{equation}{0}
\label{anticipoanticipoanticipo}
\begin{theorem}
\label{maremmabucaiola}
The natural restriction operator $\res:\wt\gg\otimes\Lambda^2 V^*\to\wt\gg\otimes\Lambda^2 W^*$ induces an $\gs$-equivariant isomorphism $H^{1,2}(\wt\gg)\simeq H^{1,2}(\wt\gg,W)$.
\end{theorem}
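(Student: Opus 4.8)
The plan is to read the statement off the long exact sequence \eqref{sequenza} of Proposition \ref{sux}. Specializing \eqref{sequenza} to $p=1$, $q=2$ and inserting $H^{1,1}(\wt\gg,W)=H^{2,1}(\wt\gg,W)=0$ from Proposition \ref{co1ord}, one obtains an exact sequence of $\gs$-modules
$$0\longrightarrow H^{1,2}(\wt\gg)\overset{\rho}{\longrightarrow}H^{1,2}(\wt\gg,W)\overset{\delta}{\longrightarrow}H^{0,2}(\wt\gg,W)\overset{\gamma}{\longrightarrow}H^{0,3}(\wt\gg)\,.$$
By the way \eqref{kahler}, and hence \eqref{sequenza}, is built, $\rho$ is precisely the map induced by the restriction $C^{1,2}(\wt\gg)\to C^{1,2}(\wt\gg,W)$, while $\gamma$ is the composite of the isomorphism $\varphi^{-1}:H^{0,2}(\wt\gg,W)\xrightarrow{\ \sim\ }H^{0,3}_\perp(\wt\gg)$ with the map $H^{0,3}_\perp(\wt\gg)\to H^{0,3}(\wt\gg)$ induced by the inclusion $C^{0,3}_\perp(\wt\gg)\hookrightarrow C^{0,3}(\wt\gg)$. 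Thus $\rho$ is automatically injective and $\gs$-equivariant, and by exactness it is onto --- hence an isomorphism --- if and only if $\delta=0$; by exactness at $H^{0,2}(\wt\gg,W)$ the latter is equivalent to injectivity of $\gamma$. So the entire theorem reduces to the single assertion that $\gamma$ is injective.

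To prove injectivity of $\gamma$ I would use the explicit decompositions already at our disposal. By Theorem \ref{speriamobene} the source $H^{0,2}(\wt\gg,W)$ is a sum of six pairwise inequivalent irreducible $\gs$-modules, and by Theorem \ref{quelloprima} the target $H^{0,3}(\wt\gg)\simeq S^4\tH\otimes I$ is a single irreducible $\sl_2(\bC)\oplus\sl_{2n}(\bC)$-module, with $I$ the $\sl_{2n}(\bC)$-module of highest weight $\omega_3+\omega_{\ell-2}$. The first step is to branch $S^4\tH\otimes I$ to $\gs=\sl_{2n-2}(\bC)\oplus\sl_2(\bC)$ --- decompose $I$ along $\wt\tE=\tE+\tH$ and apply the Clebsch--Gordan rule for the diagonal $\sl_2(\bC)$ --- and to check that all six constituents of $H^{0,2}(\wt\gg,W)$ occur in it (necessarily each once, the source being multiplicity free). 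Granting this, $\gs$-equivariance of $\gamma$ together with Schur's lemma reduces the injectivity of $\gamma$ to showing that $\gamma$ does not annihilate any one of the six summands.

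The remaining point --- which I expect to be the real obstacle --- is to rule out $\gamma|_{S}=0$ for each of the six summands $S$. Here I would use the concrete form of $\gamma$ coming from the isomorphism $\varphi$ of Proposition \ref{sux}: if $\r$ is the fixed generator of $W^\perp\subset V$ and $\r^{*}\in V^{*}$ is the covector dual to $\r$ and vanishing on $W$, then $\gamma$ sends the class of a cocycle $\bar c\in V\otimes\Lambda^2W^{*}$ to the class of $\r^{*}\wedge\bar c$ in $H^{0,3}(\wt\gg)=(V\otimes\Lambda^3V^{*})/B^{0,3}(\wt\gg)$. A short computation with Pieri's rule shows that $S^4\tH\otimes I$ occurs with multiplicity one in $V\otimes\Lambda^3V^{*}$; since $\wt\gg=\ggl_{2n}(\bC)\oplus\sl_2(\bC)$ is reductive, $B^{0,3}(\wt\gg)$ is therefore exactly the $\wt\gg$-invariant complement of the $(S^4\tH\otimes I)$-isotypic component, and $\gamma|_{S}\neq 0$ becomes the statement that, for one explicit representative cocycle $\bar c_{S}$ of a nonzero class in $S$ --- conveniently obtained from the $\gs$-equivariant immersions $\imath_1,\dots,\imath_{17}$ constructed in the proof of Theorem \ref{speriamobene} --- the $(0,3)$-cochain $\r^{*}\wedge\bar c_{S}\in V\otimes\Lambda^3V^{*}$ has nonzero component of type $S^4\tH\otimes I$. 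Carrying this verification out for all six summands is the heart of the proof; an equivalent but computationally heavier alternative would be to determine the $\gs$-module $H^{0,3}(\wt\gg,W)=\operatorname{coker}\gamma$ directly by the method of \S\ref{anticipo} (producing explicit $(0,3)$-coboundaries via the $\imath$-maps) and then conclude by comparing dimensions with Theorems \ref{speriamobene} and \ref{quelloprima}.
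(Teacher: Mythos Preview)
Your proposal is correct and follows essentially the same route as the paper: the same specialization of \eqref{sequenza} to $p=1$, $q=2$, the same reduction to injectivity of the connecting map $\gamma:H^{0,2}(\wt\gg,W)\to H^{0,3}(\wt\gg)$, the same branching of $H^{0,3}(\wt\gg)\simeq S^4\tH\otimes\wt\tV$ under $\gs$, and the same concrete description of $\gamma$ via $\bar c\mapsto[\rho^*\wedge\bar c]$ together with the multiplicity-one observation for $S^4\tH\otimes\wt\tV$ in $V\otimes\Lambda^3V^*$. The paper, like you, leaves the six explicit verifications as the omitted ``heart'' of the argument; one small inaccuracy in your parenthetical is that not every summand of $H^{0,2}(\wt\gg,W)$ occurs with multiplicity one in the branched target (e.g.\ $\Lambda^2\tE^*S^4\tH$ appears twice), but this is irrelevant since Schur's lemma still gives injectivity of $\gamma|_S$ as soon as $\gamma|_S\neq 0$.
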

\begin{proof}
\par
Consider the  exact sequence
\beq
\label{totten}
0\longrightarrow H^{1,2}(\wt\gg)\longrightarrow H^{1,2}(\wt\gg,W)\longrightarrow H^{0,2}(\wt\gg,W)\longrightarrow H^{0,3}(\wt\gg)
\eeq
obtained from \eqref{sequenza} with $p=1$, $q=2$ and $H^{1,1}(\wt\gg,W)=0$ (Proposition \ref{co1ord}).
We want to show that the last map of \eqref{totten} is injective. Indeed this allows to extract from \eqref{totten} the subsequence 
$0\longrightarrow H^{1,2}(\wt\gg)\longrightarrow H^{1,2}(\wt\gg,W)\longrightarrow 0$, which is also exact amd therefore implies our claim.

We first show that there are no representation-theoretic obstructions to the existence of an $\gs$-equivariant immersion of $H^{0,2}(\wt\gg,W)$ into $H^{0,3}(\wt\gg)$. We recall that from Proposition \ref{quelloprima} we already know that $H^{0,3}(\wt\gg)\simeq S^4\tH\otimes \wt\tV$, where $\wt\tV$ is the kernel of the natural contraction from $\wt\tE\otimes\Lambda^3\wt\tE^*$ to $\Lambda^2\wt\tE^*$.

We now describe the associated branching rules from $\sl_2(\bC)\oplus\sl_{2n}(\bC)$ to the subalgebra $\gs=\sl_{2n-2}(\bC)\oplus \sl_{2}(\bC)$
according to \S\ref{sec:conv}. First of all:
\begin{align*}
\Lambda^3 \wt\tE^*&=\Lambda^3(\tE^*+\tH)=\Lambda^3\tE^*+\Lambda^2\tE^*\tH+\tE^*\,,\\
\Lambda^2\wt\tE^*&=\Lambda^2\tE^*+\tE^*\tH+\bC\;,\\
\wt\tE\otimes\Lambda^3\wt\tE^*&=\tE\Lambda^3\tE^*+\tE\Lambda^2\tE^*\tH+\tE\tE^*+\Lambda^3\tE^*\tH\\
&\;\;\;+\Lambda^2\tE^*+\Lambda^2\tE^* S^2\tH+\tE^*\tH\,.
\end{align*}
It is now convenient to distinguish between the cases $n\geq 3$ and $n=2$. If $n\geq 3$:
\begin{align*}
\wt\tE\otimes\Lambda^3\wt\tE^*&=(2\Lambda^2\tE^*+\tV+\bC+\Ad)+(2\tE^*+\tD+\Lambda^3\tE^*)\tH
+\Lambda^2\tE^* S^2\tH\;,
\end{align*}
where $\tV$ is the kernel of the natural contraction from $\tE\otimes\Lambda^3\tE^*$ to $\Lambda^2\tE^*$, so that
\begin{align*}
\wt\tV&= \wt\tE\otimes\Lambda^3\wt\tE^*/\Lambda^2\wt\tE^*=  (\Lambda^2\tE^*+\tV+\Ad)\\
&\;\;\;\;+(\tE^*+\tD+\Lambda^3\tE^*)\tH+\Lambda^2\tE^* S^2\tH\,,
\end{align*}
and finally
\begin{align*}
H^{0,3}(\wt\gg)&=  \Lambda^2\tE^*S^2\tH+(\tD+\Lambda^3\tE^*+\tE^*)S^3\tH+(\tV+\Ad+2\Lambda^2\tE^*) S^4\tH
\\ &\;\;\;+ (\tD+\Lambda^3\tE^*+\tE^*) S^5\tH+\Lambda^2 \tE^* S^6\tH\ .
\end{align*}
If $n=2$ then $\Lambda^3\tE^*=\tD=0$ and by a similar computation
$
\wt\tV=\Ad+\tE^*\tH+ S^2\tH
$ 
and $H^{0,3}(\wt\gg)= S^2\tH+\tE^* S^3\tH+(\Ad+\bC) S^4\tH+\tE^* S^5\tH+ S^6\tH$.
Note now that every $\gs$-isotipic component of $H^{0,2}(\wt\gg,W)$ 
consists of just one irreducible module, see Theorem \ref{speriamobene}, and that each of these modules is  also included in $H^{0,3}(\wt\gg)$ by the above observations. This shows that there are no obstructions for the existence of the required $\gs$-equivariant immersion.

In summary it is sufficient 
to check that the last map in \eqref{totten} acts non-trivially on just one element for each $\gs$-irreducible component of $H^{0,2}(\wt\gg,W)$. 
This can be done as in proof of Theorem \ref{quelloprima} and hence we  omit details.
\end{proof}
We conclude with some comments on Theorem \ref{maremmabucaiola}. Let $\wt M$ be a quaternionic manifold, with associated $\wt G$-structure $\wt\pi:\wt P\to \wt M$.
By definition its essential $\wt G$-curvature (= usual intrinsic torsion) $\wt\cR^1:\wt P\rightarrow H^{0,2}(\wt\gg)$ vanishes. On the other hand $\wt\cR^2:\wt P\to H^{1,2}(\wt\gg)$ is zero if and only if $\wt M$ is locally isomorphic to $\bH$P$^n$. Let also $M$ be an almost CR quaternionic manifold with corresponding $G$-structure $\pi:P\to M$ which admits a (local) immersion $\imath:M\to \wt M$. In particular $\cR^1=0$ and $\cR^2:P\to H^{1,2}(\wt\gg,W)$ is well-defined.

Theorem \ref{maremmabucaiola} says then that the restriction $\cR^2=\imath^*\wt\cR^2$ of $\wt\cR^2$ to $M$ is the only obstruction to the existence of a local immersion of $M$ into $\bH$P$^n$. We do not know if $\cR^2=0$ actually implies $\wt\cR^2=0$.


\begin{thebibliography}{99}
\bibitem[1]{AK2} D.\ A.\ Alekseevsky, Y.\ Kamishima, {\it Pseudo-conformal quaternionic CR structure on (4n+3)-dimensional manifolds}, Ann. Mat. Pura Appl. {\bf 187} (2008), 487--529. 
\bibitem[2]{LC} L.\ C.\ de Andr\'es, M.\ Ferná\'andez, S.\ Ivanov, J.\ A.\ Santisteban, L.\ Ugarte, D.\ Vassilev, {\it Quaternionic K\"ahler and Spin(7) metrics arising from quaternionic contact Einstein structures}, Ann. Mat. Pura Appl. {\bf 193} (2014), 261--290. 
\bibitem[3]{AM} D.\ A.\ Alekseevsky, S.\ Marchiafava, {\it Quaternionic structures on a manifold and subordinated structures},  Ann. Mat. Pura Appl. {\bf 171} (1996), 205--273.
\bibitem[4]{AH} A. Andreotti, C. D. Hill, {\it Complex characteristic coordinates and tangential Cauchy-Riemann equations}, Ann. Sc. Norm. Sup. Pisa {\bf 26} (1979), 299--324. 
\bibitem[5]{Bi2} O.\ Biquard, {\it Metriques d'Einstein asymptotiquement symetriques}, Asterisque {\bf 165} (2000).
\bibitem[6]{BT} R.\ Bott, L.\ W.\ Tu, {\it Differential forms in algebraic topology}, Graduate Texts in Mathematics {\bf 82}, Springer-Verlag (1982).
\bibitem[7]{DC} D.\ Conti, M.\ Fern\'andez, Jos\'e A. Santisteban, {\it On seven dimensional quaternionic contact solvable Lie groups}, Forum Math. {\bf 26} (2014), 547--576.
\bibitem[8]{Duc2} D.\ Duchemin, {\it Quaternionic-contact hypersurfaces}, preprint 	arXiv:0604147 (2006).
\bibitem[9]{G} V. Guillemin, {\it The integrability problem for $G$-structures}, Trans. Amer. Math. Soc. {\bf 116} (1965), 544–-560.
\bibitem[10]{Humph} J.\ E.\ Humphreys,  {\it Introduction to Lie algebras and representation theory}, Third printing, revised, Springer-Verlag, New York-Berlin, (1978).
\bibitem[11]{Kb} S. Kobayashi, {\it Transformation Groups in Differential Geometry}, Springer-Verlag, New York-Heidelberg (1972).
\bibitem[12]{Kos} B.\ Kostant, {\it Lie algebra cohomology and the generalized Borel-Weil theorem}, Ann. of Math. {\bf 74} (1961), 329--387.
\bibitem[13]{Kur} M.\ Kuranishi, {\it Strongly pseudo-convex CR structures over small balls, Part III}, Ann. of Math. {\bf 116} (1982), 249--330.
\bibitem[14]{MOP} S.\ Marchiafava, L.\ Ornea, R.\ Pantilie, {\it Twistor theory for CR quaternionic manifolds and related structures},  Monatsh. Math. {\bf 167} (2012), 531--545.
\bibitem[15]{Oc2} T. Ochiai, {\it Geometry associated with semisimple flat homogeneous spaces}, Trans. Amer. Math. Soc. {\bf 152} (1970), 159--193.
\bibitem[16]{Sal} S.\ M.\ Salamon, {\it Differential geometry of quaternionic manifolds},  Ann. Sci. Ecole Norm. Sup. {\bf 19} (1986), 31--55.
\bibitem[17]{AS} A.\ Santi, {\it A generalized integrability problem for $G$-structures}, Ann. Mat. Pura Appl. (2015), 
{\scriptsize\url{http://link.springer.com/article/10.1007/s10231-015-0523-x}}, in press.
\bibitem[18]{SS} I. M. Singer, S. Sternberg, {\it The infinite groups of Lie and Cartan I. The transitive groups}, J. Analyse Math. {\bf 15} (1965), 1–-114.
\bibitem[19]{St} S. Sternberg, {\it Lectures on Differential Geometry}, Prentice-Hall, Inc. Englewood Cliffs, New Jork (1964).
\bibitem[20]{Y} K. Yamaguchi, {\it Differential systems associated with simple graded Lie algebras}, Advances Stud. in Pure Math. {\bf 22} (1993), 413--494.
\end{thebibliography}
\end{document}